\documentclass[10pt,twoside,a4paper]{amsart}

\usepackage[english]{babel}
\usepackage[utf8]{inputenc}

\usepackage{amsfonts,amsthm,amssymb,amsmath,amsthm,latexsym,wasysym,stmaryrd,mathrsfs,dsfont,txfonts}

\usepackage{bigstrut}

\usepackage{pdftricks}
\usepackage{color}
\usepackage[pdftex,all]{xy}

\usepackage{hyperref}
\usepackage{appendix}
\usepackage[justification=centering]{caption}

\graphicspath{{Figures/}}

\setlength{\oddsidemargin}{10pt}
\setlength{\evensidemargin}{10pt}
\addtolength{\topmargin}{-30pt}
\setlength{\textwidth}{430pt}
\setlength{\textheight}{640pt}
\setlength{\headheight}{15pt}
\setlength{\headsep}{30pt}
\setlength{\captionmargin}{1cm}

\theoremstyle{theorem}
\newtheorem {theo}{Theorem}[section]
\newtheorem {theo-intro}{Theorem}[]
\newtheorem*{theo*}{Theorem}
\newtheorem {lemme}[theo]{Lemma}
\newtheorem*{lemme*}{Lemma}
\newtheorem {prop}[theo]{Proposition}
\newtheorem*{prop*}{Proposition}
\newtheorem {cor}[theo]{Corollary}
\newtheorem*{cor*}{Corollary}

\newtheorem*{cor_proof*}{Corollary (of the proof)}

\newtheorem*{conjecture*}{Conjecture}
\theoremstyle{definition}
\newtheorem {defi}[theo]{Definition}
\newtheorem*{defi*}{Definition}
\newtheorem {nota}[theo]{Notation}
\newtheorem*{nota*}{Notation}
\theoremstyle{remark}
\newtheorem {remarque}[theo]{Remark}
\newtheorem*{remarque*}{Remark}

\newtheorem*{warning*}{Warning}

\newtheorem*{remarques*}{Remarks}

\newtheorem*{warnings*}{Warnings}

\newtheorem*{convention*}{Convention}

\newtheorem*{exemple*}{Example}

\newtheorem*{exemples*}{Examples}

\newtheorem*{question*}{Question}

\newtheorem*{questions*}{Questions}

\newtheorem*{fact*}{Fact}
\newtheorem*{acknowledgments}{Acknowledgments}
\newtheorem*{gloss}{Glossary}

\setcounter{secnumdepth}{4}

\def\N{{\mathds N}}

\def\Z{{\mathds Z}}
\def\e{\varepsilon}
\def\p{\partial}
\def\UnN{\{1,\cdots,n\}}

\def\pcup{\operatornamewithlimits{\cup}\limits}
\def\psqcup{\operatornamewithlimits{\sqcup}\limits}

\def\psum{\operatornamewithlimits{\sum}\limits}

\def\fract#1/#2{\hbox{\leavevmode
  \kern.1em \raise .25ex \hbox{\the\scriptfont0 $#1$}\kern-.1em }\big/
  {\hbox{\kern-.15em \lower .5ex \hbox{\the\scriptfont0 $#2$}} }}

\def\fractt#1/#2{\hbox{\leavevmode
  \kern.1em \raise .25ex \hbox{\the\scriptfont0 $#1$}\kern-.1em
}\lower .2ex\hbox{\Big/}
  {\hbox{\kern-.15em \lower .8ex \hbox{\the\scriptfont0 $#2$}} }}

\def\subfract#1/#2{\hbox{\leavevmode
  \kern.1em \raise .25ex \hbox{\the\scriptfont0 \scriptsize $#1$}\kern-.1em }/
  {\hbox{\kern-.15em \lower .5ex \hbox{\the\scriptfont0 \scriptsize $#2$}} }}

\newcommand{\dessin}[2]{
  \vcenter{\hbox{\includegraphics[height=#1]{#2.pdf}}}}

\newcommand{\func}[3]{
  #1 \colon #2 \longrightarrow #3}
\newcommand{\plong}[3]{
  #1 \colon #2 \hookrightarrow #3}

\def\crashto{\rotatebox{75}{\reflectbox{$\lightning$}}}
\def\crashdown{\raisebox{0cm}{$\rotatebox{270}{$\crashto$}$}}
\def\crashup{\raisebox{0cm}{$\rotatebox{90}{$\crashto$}$}}

\DeclareRobustCommand\refmark[1]{\textsuperscript{\ref{#1}}}


\def\SS{\mathfrak{S}}
 
\def\lk{\textrm{lk}}
\def\vlk{\textrm{vlk}}

\def\AR{\textrm{VC}}
\def\F{\textrm{F}}
\def\CC{\textrm{CC}}
\def\Clasp{\textrm{4-move}}
\def\V{\textrm{V}}
\def\M{\textrm{M}}
\def\RU{\textrm{R1}}
\def\RD{\textrm{R2}}
\def\RT{\textrm{R3}}

\def\OC{\textrm{OC}}
\def\UC{\textrm{UC}}
\def\SV{\textrm{SV}}
\def\SC{\textrm{SC}}
\def\SR{\textrm{SR}}

\def\Sharp{\textrm{BP}}
\def\wSharp{\textrm{wBP}}

\def\P{\textrm{P}}
\def\Pn{\P_n}
\def\wP{\textrm{w}\P}
\def\wPn{\wP_n}

\def\SL{\mathcal{SL}}
\def\SLn{\SL_n}
\def\wSL{\textrm{w}\SL}
\def\wSLn{\wSL_n}

\def\CL{\mathcal{L}}
\def\CLn{\CL_n}
\def\wCL{\textrm{w}\CL}
\def\wCLn{\wCL_n}

\def\SLDn{\textrm{SLD}_n}
\def\vSLD{\textrm{vSLD}}
\def\vSLDn{\textrm{vSLD}_n}

\def\RF{\textrm{RF}}
\def\RFn{{\RF_n}}
\def\Aut{\textrm{Aut}}
\def\AutC{\Aut_{\textrm{C}}}

\def\Cl{\textrm{Cl}}
\def\Op{\textrm{Op}}
\def\Id{\textrm{Id}}
\def\Tube{\textrm{Tube}}

\def\HL{\textrm{HL}}

\def\m{\textrm{mod}}

\def\cprecc{\hbox{$\stackrel{c}{\Rightarrow}$}}
\def\wprecc{\hbox{$\stackrel{w}{\Rightarrow}$}}
\def\vprecc{\rotatebox{300}{$\stackrel{w}{\Rightarrow}$}}
\def\vcong{\rotatebox{90}{$\simeq$}}

\def\psum{\operatornamewithlimits{\sum}\limits}

\begin{document}
\title{Extensions of some classical local moves on knot diagrams}

\author[B. Audoux]{Benjamin Audoux}
         \address{Aix--Marseille Universit\'e, I2M, UMR 7373, 13453 Marseille, France}
         \email{benjamin.audoux@univ-amu.fr}
\author[P. Bellingeri]{Paolo Bellingeri}
         \address{Universit\'e de Caen, LMNO, UMR 6139, 14032 Caen, France}
         \email{paolo.bellingeri@unicaen.fr}
\author[J-B. Meilhan]{Jean-Baptiste Meilhan}
         \address{Universit\'e Grenoble Alpes, IF, UMR 5582, 38000 Grenoble, France}
         \email{jean-baptiste.meilhan@ujf-grenoble.fr}
\author[E. Wagner]{Emmanuel Wagner}
         \address{Universit\'e Bourgogne Franche-Comt\'e, IMB, UMR 5584, 21000 Dijon, France}
         \email{emmanuel.wagner@u-bourgogne.fr}

\subjclass{57M25, 57M27, 20F36}

\date{\today}
\begin{abstract}
In the present paper, we consider local moves on classical and welded 
diagrams: (self-)crossing change, (self-)virtualization, virtual
conjugation, Delta, fused, band-pass and welded band-pass moves. Interrelationship
between these moves is discussed and, for each of
these move, we provide an algebraic classification. 
We address the question of relevant welded extensions for
classical moves in the sense that the classical quotient of classical object
embeds into the welded quotient of welded objects. 
As a by-product, we obtain that all of the above local moves are unknotting operations for welded (long) knots. 
We also mention some topological interpretations for these combinatorial quotients.
\end{abstract}

\maketitle

\section*{Introduction}

Although knot and link theory has its roots and foundations in the topology of
embedded circles in 3-space, its study was early turned
combinatorial by considering generic projections
which can be seen as decorated 4-valent planar graphs. This opened a
new way to think the topology in terms of
combinatorial local
moves. First, ambient isotopies were proved to correspond to Reidemeister moves
\cite{Reidemeister}. Other equivalence relations were then interpreted
as the quotient under some additional local moves: for general homotopy, one
should authorize crossing changes (CC); for link-homotopy, only
self-crossing changes (SC) \cite{Milnor}; and link-homology, introduced by
Murakami and Nakanishi \cite{MN} and Matveev \cite{Matveev}, corresponds to Delta moves ($\Delta$). 
Other local
moves were also investigated, still within some topological
perspectives, such as the band-pass move ($\Sharp$) which is motivated by the
crossing of two bands, but also from more algebraic or even purely
combinatorial considerations. 
These notions straightforwardly
extend to other kinds of knotted objects in dimension 3.
\begin{figure}[h]
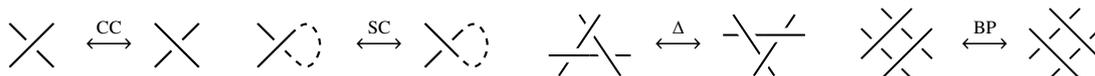

    \[
  \begin{array}{ccc}
 \dessin{1cm}{CC_1}\ \stackrel{\CC}{\longleftrightarrow}
      \dessin{1cm}{CC_2}
   &
     \dessin{1cm}{SC_1}\ \stackrel{\SC}{\longleftrightarrow}
      \dessin{1cm}{SC_2}
      \end{array}
  \begin{array}{cc}
  \dessin{1.2cm}{Del_1} \stackrel{\Delta}{\longleftrightarrow}
        \dessin{1.2cm}{Del_2}
    &
 \dessin{1.3cm}{Sharp_1}\ \stackrel{\Sharp}{\longleftrightarrow}
      \dessin{1.3cm}{Sharp_2}
      \end{array}
  \]
  \caption{Classical local moves}
  \label{fig:ClassLocMoves0}
\end{figure}

Forgetting the planarity assumption for the decorated 4-valent graphs gives rise to
the notion of \emph{virtual links}, introduced from the diagram point of view
by Kauffman \cite{Kauffman} and from the Gauss diagram point of view
by Goussarov, Polyak and Viro \cite{GPV}. In this virtual context, two 
forbidden local moves emerged: the over- and the under-commute
moves. The \emph{welded} theory, first introduced in the braid context
by Fenn, Rim{\'a}nyi and Rourke \cite{FRR}, is defined by allowing one of them;
the \emph{fused} theory, already mentionned by Kauffman in
\cite{Kaufmann},  by allowing both. 

Welded knots and links provide a sensible extension of usual knot theory in the sense that two classical links are equivalent as welded objects if and only if they are classically equivalent. 
In 2000, Satoh provided another topological
motivation for welded knotted objects by generalizing a construction ---given fourty years
earlier, in the classical case, by Yajima \cite{Yajima}--- that
inflates diagrams into embedded tori in 4-space which bound
immersed solid tori with only ribbon singularities. The resulting map,
so-called $\Tube$ map, is surjective but its injectivity remains an intriguing
question: 
false for welded links \cite{Winter,Jess}, true for welded braids \cite{BH} and undetermined for welded string links.

In \cite{vaskho}, the authors used the $\Tube$ map to classify
ribbon tubes and ribbon 2-torus links --- which are a 2--dimensional analogue of
string links and links --- up to link-homotopy. Along the paper, several
phenomena emerged:
\begin{enumerate}
\item\label{item:1} link-homotopy among ribbon objects is
  generated by the image through the $\Tube$ map of a single local
  move, namely the self-virtualization (SV), and up to this move, the
  $\Tube$ map is one-to-one;
\item\label{item:2} as in the classical case, every welded
  string link is link-homotopic to a welded braid, that is,  the map from welded pure braids to
  welded string links up to self-virtualization is surjective;
\item \label{item:3} the given classification of welded string links up to self-virtualization  is a natural extension of the classification of
  classical string links up to link-homotopy given by Habegger and Lin
  \cite{HL}. As such, it suggests that self-virtualization is a
  natural welded extension of the classical self-crossing change, in
  the sense that the embedding of planar 4-valent graphs into
  general 4-valent graphs induces an embedding of classical
  string links up to self-crossing change into welded string links up to
  self-virtualization.
\end{enumerate}

Point (\ref{item:2}) has been developed in \cite{vaskho2}.
Point (\ref{item:1}) is raised at the end of the present introduction, but the paper essentially pushes further the analysis of point (\ref{item:3}) by discussing 
the welded extensions of the classical 
$\Delta$ and $\Sharp$ moves. 
In doing so, we define several candidates for such extensions and compare
them, carrying on a work initiated in the classical case by
Murakami--Nakanishi \cite{MN} and Aida
\cite{Aida}. An unexpected outcome is that a given classical local move may
admit several distinct welded extensions (see {\it e.g.} Proposition \ref{prop:FextendsD}). 
Specifically, we consider the following non classical local moves (see section \ref{sec:LocalMoves} for details): 
\begin{figure}[h]
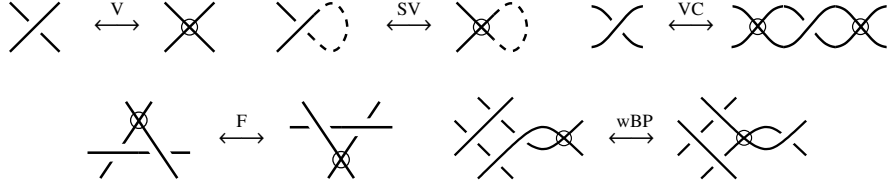

\[  \begin{array}{ccc}
\dessin{1.125cm}{V_1}\ \stackrel{\V}{\longleftrightarrow}
      \dessin{1.125cm}{V_2}
    &
    \dessin{1.125cm}{SV_1}\ \stackrel{\SV}{\longleftrightarrow}
      \dessin{1.125cm}{SV_2}
      &
    \dessin{.9375cm}{AR_00} \stackrel{\AR}{\longleftrightarrow}
        \dessin{.9375cm}{AR_0}
       \end{array}        \]
\[  \begin{array}{ccc}
\dessin{1.5cm}{F_3} \stackrel{\F}{\longleftrightarrow}
        \dessin{1.5cm}{F_4}
&
\dessin{1.5cm}{wSharp_1} \stackrel{\ \wSharp}{\longleftrightarrow}
        \dessin{1.5cm}{wSharp_2}
 \end{array}        \]
  \caption{Non classical local moves}
  \label{fig:WeldedLocMoves0}
\end{figure}

We provide in Theorems \ref{th:LocMovesOrder} and \ref{th:Del=>SC} an ordering between the above classical and non classical local moves. Notice that these results hold for all types of welded knotted objects.  
Moreover, in the case of links and string links, we provide a complete classification under these moves, as stated below. 

Recall that links up to $\Delta$ moves are classified by the linking numbers \cite[Thm 1.1]{MN}, while 
links up to $\Sharp$ are classified by the modulo 2 reduction of  $\psum_{1\leq k\neq i\leq n}\lk_{ik}$
\cite{MN,Kawauchi}. 
The main results of this paper can be summarized as follows. 
\begin{theo-intro}\label{th:ColoredLinksClassification}
  \begin{itemize}
  \item[]
  \item Welded links up to $\F$ are classified by the virtual linking
    numbers.
  \item Welded links up to $\AR$ are classified by the
    $(\vlk_{ij}+\vlk_{ji})$'s.
  \item Welded links up to $\CC$ are classified by the
    $(\vlk_{ij}-\vlk_{ji})$'s.
  \item Welded links up to $\wSharp$ are classified by the
    $\vlk_{i*}^\m$'s and the $(\vlk_{ij}^\m+\vlk_{ji}^\m)$'s.
  \end{itemize}
\end{theo-intro}
Here, the \emph{virtual linking number} $\vlk_{ij}$ is the welded 
link invariant which counts, with signs, the crossings where the $i^\textrm{th}$ component overpasses the $j^\textrm{th}$ component,  $\vlk_{ij}^\m$ denotes its modulo 2 reduction, and $\vlk_{i*}^\m$ denotes the modulo 2 reduction of  $\psum_{1\leq k\neq i\leq n}\vlk_{ik}$.  

As a consequence, we obtain that $\AR$, $\Delta$, $\F$, $\Sharp$ and $\wSharp$ are all unknotting operations for welded knots, which recovers and extends a result recently proved by S. Satoh \cite{Satoh2} using a different approach. We actually show the stronger result that these are all unknotting operations for welded long knots. 
Another consequence is the following extension result. 
\begin{theo-intro}\label{th:ColoredLinksExtension}
  \begin{itemize}
  \item[]
  \item Links up to $\Delta$ embed into welded links up to $\F$.
  \item Links up to $\AR$ embed into welded links up to $\F$.
  \item Links up to $\Sharp$ embed into welded links up to $\wSharp$.
  \end{itemize}
\end{theo-intro}
Note that the classification of welded links up to $\F$ has been independently proved in \cite{Nas} with a completely different and algebraic approach. This completes a previous result of Fish and Keyman \cite[Thm 2]{FK2} (see also \cite[Thm 3.7]{BBD} for a shorter proof) stating that  
fused links with only classical crossings are classified by linking numbers.

To prove these results, we provide algebraic classifications of all the considered local moves for string links. 
For each of them, we give an explicit group isomorphism between the
quotient space of (welded) string links and a power of $\Z$ or $\Z_2$.  
Our main tool will be the theory of Gauss diagrams, mentioned earlier, which is an even more combinatorial
alternative to describe virtual diagrams. If the virtual diagrams are a pleasant
tool to picture local operations, Gauss diagrams appear to be more
efficient to handle global manipulations. In the present paper, we shall adopt and use both points of view in parallel.
\medskip 

Let us conclude this introduction with a few comments returning back to topology.

In \cite{MN}, Murakami and Nakanishi introduced a notion of
\emph{link-homology} which can be rephrased as the quotient where two
elements are identified whenever the image of each strand in the homology groups 
of the complement of the other strands are the
same. They noted that the classification of links up to $\Delta$ by the
linking numbers implies that the link-homology is generated by the
$\Delta$ move or, equivalently, that (string)  links up to $\Delta$ describe  (string) links up to link-homology. 
Similarly, string links up to self-crossing changes were studied in \cite{HL} as the group of string
links up to link-homotopy --- that is the topological quotient where
each connected component is allowed to cross itself.

As already mentioned, welded (string) links also have a 
topological interpretation, via Satoh's Tube map \cite{Satoh}. This topological interpretation is however partial since the $\Tube$ map is surjective but not injective. Indeed,
performing $\SR$ ---a local move depicted in Figure \ref{fig:WeldedLocMoves}--- on each classical crossing and reversing the
orientation on a given link diagram produces another diagram with same
image through $\Tube$; see \cite[Thm 3.3]{Winter} or \cite[Prop 2.7]{Jess}. It is
moreover still unknown whether this move generates all the kernel of
$\Tube$ for welded links.

In \cite{vaskho}, the authors applied the $\Tube$ map to welded string
links, producing \emph{ribbon tubes}. 
It is shown in \cite[Prop 3.16]{vaskho} that $\SV$
generates ribbon link-homotopy on ribbon tubes, and that the $\Tube$
map is injective on the quotient, thus producing a full topological
interpretation for welded string links up to $\SV$. Furthermore, the virtual linking number $\vlk_{ij}$ corresponds to the evaluation of any longitude ---that is any path from one
boundary component to the other--- of the $j^\textrm{th}$ tube in $\Z$ seen as the first homology
group of the complement of the $i^\textrm{th}$ tube. 
Since longitudes are the only subspaces of a ribbon tube component that may have a nontrivial image 
in the homology groups of the complement of the other components, 
it follows from Proposition
\ref{prop:ClassificationF} that welded string links up to $\F$ describe faithfully ribbon
tubes up to (the natural extension of
  Murakami and Nakanishi's notion of) link-homology.

Let us mention here that the term ``link-homology'' is also used in the litterature as a synonym for bordance, which is a weakening of the concordance obtained by allowing any cobordism. It appears within the framework of usual knot theory (see  \cite{Sanderson1, Sanderson2, CKSS} for a classification result), but also in the context of virtual knot theory seen as links in thickened surfaces up to isotopy and (de)stabilization. Carter, Kamada and Saito proved in \cite{CKS} that virtual links up to virtual link-homology are classified by virtual linking numbers. As a byproduct, we obtain that the two forbidden moves generate virtual link-homology.

\medskip 

The paper is organized as follows. In Section \ref{sec:definitions},
the central objects of the paper ---classical/welded
string links and the notion of local move--- are defined from both the virtual and
the Gauss diagrams points of view.
Section \ref{sec:LocalMoves} introduces all the considered local moves
(see Figures \ref{fig:ClassLocMoves} and \ref{fig:WeldedLocMoves})
and study their interrelationship. 
In Section \ref{sec:Classification}, virtual linking numbers are 
used to provide a complete algebraic
classification of welded string links up to each considered local
move. This algebraic identifications are then used to discuss welded
extensions for classical (self)-crossing changes, $\Delta$ and
$\Sharp$ moves. Section \ref{sec:Others} builds on the string link case to
address other kinds of knotted objects such as  links and
pure braids. 

\begin{gloss}
Throughout the paper, the various local moves studied in this paper will be denoted by the notation introduced in their defining figures.  
For the reader's convenience, we list below these various acronyms, their meaning and the references to their definition. 
\[
\begin{array}{cccccc}
 Ri\, ;\,\,i=1,2,3 & &  \textrm{Reidemeister move i}  & & \textrm{Figure \ref{fig:AllReidMoves}} \\
 vRi\, ;\,\,i=1,2,3 & &  \textrm{virtual Reidemeister move i}  & & \textrm{Figure \ref{fig:AllReidMoves}} \\
 \OC & &  \textrm{Over-commute move}  & & \textrm{Figure \ref{fig:AllReidMoves}} \\
 \UC & &  \textrm{Under-commute move}  & & \textrm{Proposition \ref{prop:F<=>UC}} \\
 \CC & & \textrm{Crossing Change} & & \textrm{Figures \ref{fig:ClassLocMoves0} and \ref{fig:ClassLocMoves}}\\
 \SC & & \textrm{Self-crossing Change} & & \textrm{Figures \ref{fig:ClassLocMoves0} and \ref{fig:ClassLocMoves}}\\
 \Delta & &  \textrm{Delta move}  & & \textrm{Figures \ref{fig:ClassLocMoves0} and \ref{fig:ClassLocMoves}} \\
 \Sharp & &  \textrm{unoriented band-pass move}  & & \textrm{Figures \ref{fig:ClassLocMoves0} and \ref{fig:ClassLocMoves}} \\
 \V  & &  \textrm{Virtualization move}  & & \textrm{Figures \ref{fig:WeldedLocMoves0} and \ref{fig:WeldedLocMoves}} \\
 \SV & &  \textrm{Self-virtualization move}  & & \textrm{Figures \ref{fig:WeldedLocMoves0} and \ref{fig:WeldedLocMoves}} \\
 \AR & &  \textrm{Virtual conjugation move}  & & \textrm{Figures \ref{fig:WeldedLocMoves0} and \ref{fig:WeldedLocMoves}} \\
 \SR & &  \textrm{Sign reversal move}  & & \textrm{Figure \ref{fig:WeldedLocMoves}} \\
 \F  & &  \textrm{Fused move}  & & \textrm{Figures \ref{fig:WeldedLocMoves0} and \ref{fig:WeldedLocMoves}} \\
 \wSharp & &  \textrm{welded band-pass move}  & & \textrm{Figures \ref{fig:WeldedLocMoves0} and \ref{fig:WeldedLocMoves}}      
\end{array}
\]
We also  note here, as a point of convention, that the same acronym shall often be used when refering to the equivalence relation on diagrams generated by the corresponding local move. 
\end{gloss}

\begin{acknowledgments}
This work began during the summer school \emph{Mapping class groups, $3$- and $4$-manifolds} in Cluj in July 2015; 
the authors thank the organizers for the great working environment. They also thank the referee for the careful reading and useful suggestions.
The research of the authors was partially supported by  the late French ANR research project ``VasKho'' ANR-11-JS01-002-01. 
\end{acknowledgments}



\section{Classical and welded string links} \label{sec:definitions}



We first introduce, in two different but equivalent ways, the main objects of this paper. All along the text, $n$ shall be a   positive integer.

\subsection{Virtual diagrams}\label{subsec:Diagrams}

Fix $n$  real numbers $0<p_1<\cdots<p_n<1$.  
\begin{defi}\label{def:diagrStringLink}
A \emph{virtual string link diagram} is an immersion of $n$ oriented intervals $\psqcup_{i\in\UnN}I_i$ in $I\times I$, called \emph{strands}, such that
\begin{itemize}
\item for each $i\in\UnN$, the strand $I_i$ has boundary $\p I_i=\{p_i\}\times\{0,1\}$ and is oriented from $\{p_i\}\times\{0\}$ to $\{p_i\}\times\{1\}$;
\item the singular set is a finite number of transverse double points;
\item each double point is labelled, either as a \emph{positive crossing}, as a \emph{negative crossing}, or as a \emph{virtual crossing}. Positive and negative crossings are also called \emph{classical crossings}. 
\end{itemize}
Strands are naturally ordered by the order of
their endpoints on either $I\times\{0\}$ or $I\times\{1\}$.\\
A virtual string link diagram which has no virtual crossing is said to be \emph{classical}.\\
Up to ambient isotopy and reparametrization,  the set of virtual string link diagrams is naturally endowed with a structure of monoid by the stacking product, 
where the unit element is the trivial diagram $\pcup_{i\in \UnN} \{p_i\}\times I$; we denote this monoid by $\vSLDn$ and its submonoid made of classical string link diagrams by $\SLDn$. We denote by $\plong{\iota}{\SLDn}{\vSLDn}$ the natural injection.\\
Crossings where the two preimages belong to the same strand are called \emph{self-crossings}.   
\end{defi}

We shall use the usual drawing convention for crossings: 
  \[
  \begin{array}{ccccc}
    \dessin{1.2cm}{CPositive}&\hspace{1cm}&\dessin{1.2cm}{CNegative}&\hspace{1cm}&\dessin{1.2cm}{CVirtuel}\\
    \textrm{positive crossing}&&\textrm{negative crossing}&&\textrm{virtual crossing}
  \end{array}.
\]
\medskip

\begin{defi}\label{def:localmove}
  A \emph{local move} is a transformation that changes a diagram only
  inside a disk. It is specified by the contents of the disk, before
  and after the move. In our context, the contents shall be pieces of
  strands, without any specified orientation, which may classically and virtually cross
  themselves. If the disk does not contain
  any virtual crossing neither before nor after the move, we say the
  local move is \emph{classical}.\\
To represent a local move, we shall draw only the disk where the move
occurs. Examples are given in Figures \ref{fig:ClassLocMoves0} and \ref{fig:WeldedLocMoves0}.
\end{defi}

\begin{defi}\label{def:wStringLink}
A \emph{string link} is an equivalence class of $\SLDn$ under the three classical Reidemeister moves.
We denote by $\SLn$ the set of string links; it is a monoid
with composition induced by the stacking product.\\
  A \emph{welded string link} is an equivalence class of $\vSLDn$ under the 
welded Reidemeister moves, which are the classical and the virtual Reidemeister moves,  
together with the mixed and the over-commute ($\OC$) moves, given in Figure \ref{fig:AllReidMoves}. 
We denote by $\wSLn$ the set of welded string links; it is a monoid
with composition induced by the stacking product.\\
Elements of $\SL_1$ and $\wSL_1$ are also called, respectively, long knots and welded long knots.
\end{defi}

\begin{figure}
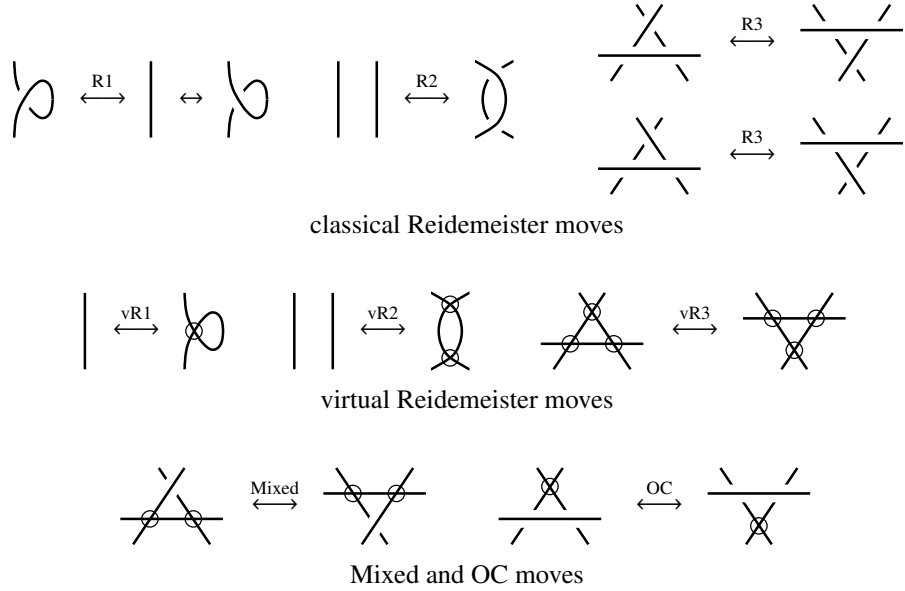

\[
  \begin{array}{c}
    \begin{array}{ccc}
      \dessin{1.5cm}{R1_1} \stackrel{\textrm{R1}}{\longleftrightarrow}  \dessin{1.5cm}{R1_2} \leftrightarrow \dessin{1.5cm}{R1_3} \, &
      \,   \dessin{1.5cm}{R2_1} \stackrel{\textrm{R2}}{\longleftrightarrow}  \dessin{1.5cm}{R2_2} \, 
      & \, 
      \begin{array}{c}
\dessin{1.5cm}{R3_1} \stackrel{\textrm{R3}}{\longleftrightarrow}  \dessin{1.5cm}{R3_2}\\
\dessin{1.5cm}{R3_3} \stackrel{\textrm{R3}}{\longleftrightarrow}  \dessin{1.5cm}{R3_4}
      \end{array}
      \end{array}
     \\
    \textrm{classical Reidemeister moves}\\[0.4cm]
    \begin{array}{ccc}
   \dessin{1.5cm}{vR1_1}
    \stackrel{\textrm{vR1}}{\longleftrightarrow}  \dessin{1.5cm}{vR1_2} \, & \, 
    \dessin{1.5cm}{vR2_1} \stackrel{\textrm{vR2}}{\longleftrightarrow}  \dessin{1.5cm}{vR2_2}
   \, & \, 
   \dessin{1.5cm}{vR3_1} \stackrel{\textrm{vR3}}{\longleftrightarrow}  \dessin{1.5cm}{vR3_2}
   \end{array}
   \\
    \textrm{virtual Reidemeister moves}\\[0.4cm]
    \begin{array}{cc}
      \dessin{1.5cm}{vR3_3}\stackrel{\textrm{Mixed}}{\longleftrightarrow} \dessin{1.5cm}{vR3_4} \, & \, 
      \dessin{1.5cm}{OC_1}\ \stackrel{\OC}{\longleftrightarrow}
      \dessin{1.5cm}{OC_2}
    \end{array}
    \\
    \textrm{Mixed and OC moves}
  \end{array}
\]
  \caption{Welded Reidemeister move}\label{fig:AllReidMoves}
\end{figure}
%

String-links can be seen as an intermediate object between braids and links. For convenience and since we 
shall mention them, 
we give short definitions of these objects:

\begin{itemize}
\item compared with string links and welded string links, \emph{pure
    braids} and \emph{welded pure braids} are defined by requesting,
  in addition, that the immersed intervals are monotone with respect to
  the second coordinate. Ambient isotopies are then also requested to respect this monotony. The stacking product induces then group
  structures that we denote, respectively, by $\Pn$ and $\wPn$;
\item \emph{links} and \emph{welded links} are defined by replacing,
  in Definition \ref{def:diagrStringLink}, the disjoint union of oriented
  intervals $\psqcup_{i\in\UnN}I_i$ by a disjoint union of oriented
  circles $\psqcup_{i\in\UnN}S^1_i$ (ignoring the points $p_i$). 
  Note that (welded) links are thus implicitly equipped with an enumeration of the set of its connected components.  
  We denote by  $\CLn$ and $\wCLn$ the sets of  links and welded links.  
  Elements of $\CL_1$ and $\wCL_1$ are also called, respectively, knots and welded knots.
\end{itemize}


\subsection{Gauss diagrams}

Welded string links 
can by definition be represented by virtual string link diagrams which can, in turn, 
be alternatively described in terms of \emph{Gauss diagrams}.

\begin{defi} \label{def:GD}
A \emph{Gauss diagram} is defined over $n$ ordered and
  oriented intervals, called \emph{strands}, as a finite set of triplets $(t,h,\sigma)$,
called \emph{arrows},
where $t$ and $h$, called respectively the \emph{tail} and the \emph{head} of
the arrow, are elements of the strands and $\sigma\in\{\pm1\}$ is a sign.
Tails and heads, also called \emph{endpoints} or \emph{ends}, are all distinct and considered up to
orientation-preserving homeomorphisms of the strands.\\
The strands are represented by parallel upward thick intervals arranged in
increasing order, and each arrow by
an actual thin arrow, going from its tail to its head, labelled by its sign.\\
Arrows having both ends on the same strand are called \emph{self-arrows}.
\end{defi}
See the right-hand side of Figure \ref{fig:cestcadeaucamfaisplaisir} for an example. 
\begin{figure}[h!]
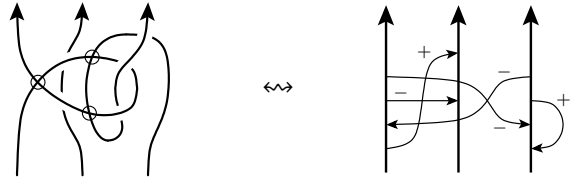

\[
\begin{array}{ccc}
\dessin{3cm}{Ex_wD_2} 
&\hspace{.5cm}\leftrightsquigarrow\hspace{.5cm}&
\dessin{3cm}{Ex_GD_1} 
   \end{array}
 \] 
 \caption{A virtual diagram, and the corresponding Gauss diagram}\label{fig:cestcadeaucamfaisplaisir}
\end{figure}

\begin{defi}\label{def:localmoveGD}
  A \emph{local move} is a transformation that changes arrows only on
  a given finite union of portions of strands. It is specified by the
  portions with their arrows, before
  and after the move, and it is assumed that no other arrow has an
  endpoint on these portions.
\end{defi}

To represent a local move, we shall draw only the portions where the
move occurs by parallel thick intervals, without any specified
orientation nor ordering.
The reader should be aware that, when the local move is applied to a given Gauss diagram,
the portions should be reordered ---and even possibly put one above others if they are
part of a same strand--- and possibly reversed to get upward; 
(s)he should also keep in mind that there is a non represented part,
which is identical on each side of the move, but that no arrow can
connect the non represented part to the represented one. Examples are given in Figure \ref{fig:ReidMoves} and on the right hand-side of Figures \ref{fig:ClassLocMoves} and \ref{fig:WeldedLocMoves}.
\medskip 

There is a one-to-one correspondence between Gauss diagrams up to ambient isotopy and
virtual diagrams up to virtual Reidemeister and mixed moves.
It associates a Gauss diagram to any virtual
diagram so that the set of positive and negative
crossings in the virtual diagram are in one-to-one correspondence
with, respectively, the set
of $+1$--labelled and $-1$--labelled arrows in the Gauss diagram. This procedure is, for example, described in
\cite[Section 4.5]{vaskho}, and is illustrated in Figure \ref{fig:cestcadeaucamfaisplaisir}.  
Local moves on virtual diagrams have Gauss diagrams counterparts.  
Figure \ref{fig:ReidMoves} gives the Gauss diagram versions of the classical Reidemester moves. 
Note that the Gauss diagram counterparts of the virtual Reidemeister and mixed moves are actually trivial since they do not affect 
any classical crossing. 
Throughout the paper, we shall use indifferently one or the other description. 
\begin{figure}[h]
\[
\begin{array}{cccc}
  \dessin{1.8cm}{GR1_3} \stackrel{\RU}{\leftrightarrow}
  \dessin{1.8cm}{GR1_2}
  \, & \, 
  \dessin{1.8cm}{GR2_1} \stackrel{\RD}{\leftrightarrow}
  \dessin{1.8cm}{GR2_2}
  \, & \, 
  \dessin{1.8cm}{GR3_1} \stackrel{\RT}{\leftrightarrow} \dessin{1.8cm}{GR3_2}
  \, & \,  
  \dessin{1.8cm}{GOC_1}\  \stackrel{\OC}{\leftrightarrow}\ \dessin{1.8cm}{GOC_2}
\end{array}
\] 
  \caption{Welded Reidemeister moves on Gauss diagrams\\
  {\footnotesize There is a sign condition for applying move R3, namely that  $\e_i\delta_i=\e_j\delta_j$, where $\delta_k=1$ if the the $k^\textrm{th}$ strand, read from left to right, is oriented upward and $-1$ otherwise}}\label{fig:ReidMoves}
\end{figure}

\noindent This correspondence yields a faithful representation of welded string links, by Gauss diagrams up to the welded Reidemeister moves depicted in Figure \ref{fig:ReidMoves}. 

\medskip 

Welded pure braids and welded links also enjoy Gauss
diagram descriptions:
\begin{itemize}
\item welded pure braids are faithfully represented, up to the welded Reidemeister moves, by Gauss diagrams
  with only horizontal arrows. This result, usually considered as folklore (see for instance \cite{WKO1}), is a consequence of a
similar result on virtual braids \cite[Prop 2.24]{Ci}  (see \cite{DaTh} for a
complete proof in the welded case);
\item by replacing intervals by oriented circles in Definition \ref{def:GD}, we obtain a tool that, up to welded Reidemeister moves, faithfully represents welded links. Gauss diagrams were actually first defined over a single circle to describe knots; see for instance \cite{GPV} or  \cite{F}.
\end{itemize}


\section{Local moves and their relations}
\label{sec:LocalMoves}


In this section, we introduce several local moves and study their interrelationship.

\subsection{Local moves}

In Figures \ref{fig:ClassLocMoves} and \ref{fig:WeldedLocMoves}, we introduce the different local moves that we shall study in detail.

We first consider the classical local moves which were already presented in Figure \ref{fig:ClassLocMoves0}. 
\begin{figure}[h]
    \[
  \begin{array}{ccc}
    \dessin{1.125cm}{CC_1} \stackrel{\CC}{\longleftrightarrow}
      \dessin{1.125cm}{CC_2}
    &\hspace{.5cm}\leftrightsquigarrow\hspace{.5cm}&
                     \dessin{1.8cm}{GCC_1} \stackrel{\CC}{\longleftrightarrow}
      \dessin{1.8cm}{GCC_2}\\

   \dessin{1.125cm}{SC_1} \stackrel{\SC}{\longleftrightarrow}
      \dessin{1.125cm}{SC_2}
    &\hspace{.5cm}\leftrightsquigarrow\hspace{.5cm}&
                     \dessin{1.8cm}{GSC_1} \stackrel{\SC}{\longleftrightarrow}
      \dessin{1.8cm}{GSC_2}\\

\dessin{1.5cm}{Del_1} \stackrel{\Delta}{\longleftrightarrow}
        \dessin{1.5cm}{Del_2}
    &\hspace{.5cm}\leftrightsquigarrow\hspace{.5cm}&
                     \dessin{1.8cm}{GDel_1} \stackrel{\Delta}{\longleftrightarrow}
      \dessin{1.8cm}{GDel_2}\\

    \dessin{1.69cm}{Sharp_1} \stackrel{\Sharp}{\longleftrightarrow}
      \dessin{1.69cm}{Sharp_2}
    &\hspace{.5cm}\leftrightsquigarrow\hspace{.5cm}&
                     \dessin{1.8cm}{GSharp_1} \stackrel{\Sharp}{\longleftrightarrow}
      \dessin{1.8cm}{GSharp_2}
  \end{array}
  \]
  \caption{Classical local moves\\
    {\footnotesize The Gauss diagram version of move $\Delta$, resp. of $\Sharp$, is subject to the condition that $\e_i\delta_i=\e_j\delta_j$, resp. that $\e_{ij}\e_{kl}=\delta_i \delta_j\delta_k\delta_l$, where $\delta_k=1$ if the the $k^\textrm{th}$ strand, read from left to right, is oriented upward and $-1$ otherwise}}
  \label{fig:ClassLocMoves}
\end{figure}


The crossing change $\CC$ is certainly the simplest and most natural local move in classical knot theory. Its refinement $\SC$ requires the additional self-connectedness 
condition that the two involved pieces of strand belong to the same strand. Note that, although the modification remains local, checking that the pieces are connected is not. This latter move was introduced by Milnor in \cite{Milnor} as a generating move for link-homotopy and furthermore studied by Habegger and Lin in \cite{HL}.

The $\Delta$ move was introduced by Murakami and Nakanishi in \cite{MN} and by Matveev in \cite{Matveev} as a local and combinatorial incarnation of the link-homology quotient of links. There exists another representation of this move, which is given by its mirror image, which is easily checked to be equivalent,  see \cite[Figure 1.1(c)]{MN}.
Similar observations hold for each of the local moves introduced below, and we shall only give one formulation and freely use equivalent versions, leaving as an exercice to the reader to check that they are indeed equivalent. 

The $\Sharp$ move is the unoriented counterpart of the band-pass move, introduced by Murakami in \cite{Murakami} as an alternative unknotting operation for knots.

\medskip

Let us now turn to non classical local moves.
\begin{figure}[h]
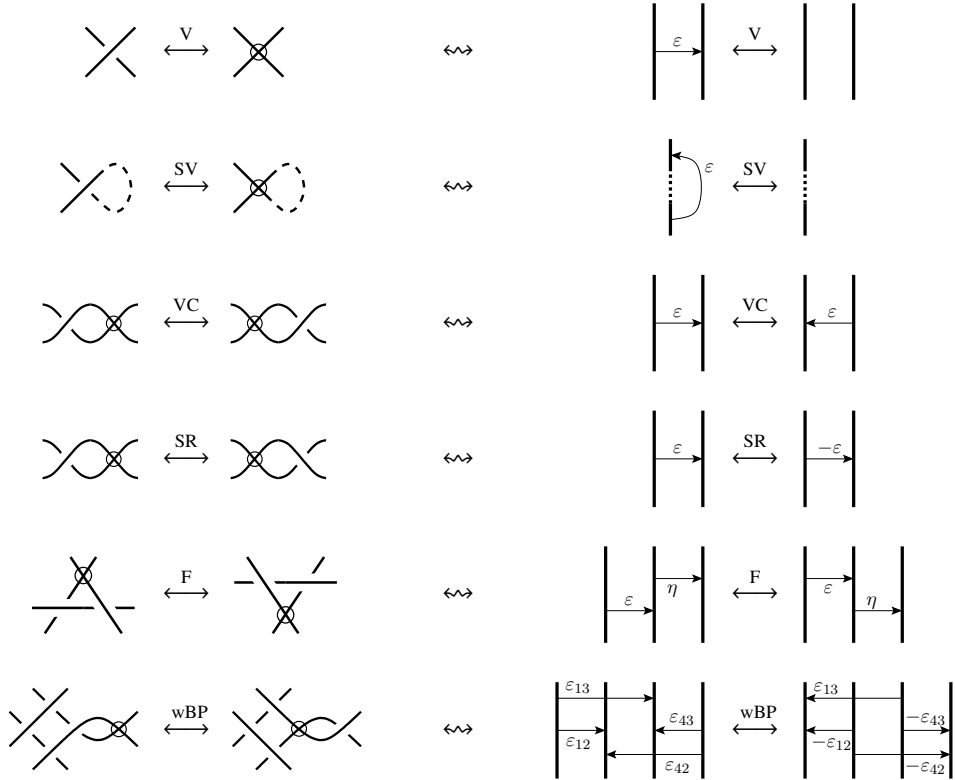

    \[
  \begin{array}{ccc}
   \dessin{1.125cm}{V_1} \stackrel{\V}{\longleftrightarrow}
      \dessin{1.125cm}{V_2}
    &\hspace{.5cm}\leftrightsquigarrow\hspace{.5cm}&
                     \dessin{1.8cm}{GV_1} \stackrel{\V}{\longleftrightarrow}
      \dessin{1.8cm}{GV_2}\\

\dessin{1.125cm}{SV_1} \stackrel{\SV}{\longleftrightarrow}
      \dessin{1.125cm}{SV_2}
    &\hspace{.5cm}\leftrightsquigarrow\hspace{.5cm}&
                     \dessin{1.8cm}{GSV_1} \stackrel{\SV}{\longleftrightarrow}
      \dessin{1.8cm}{GSV_2}\\

\dessin{.9375cm}{AR_1} \stackrel{\AR}{\longleftrightarrow}
        \dessin{.9375cm}{AR_2}
    &\hspace{.5cm}\leftrightsquigarrow\hspace{.5cm}&
                     \dessin{1.8cm}{GAR_1} \stackrel{\AR}{\longleftrightarrow}
      \dessin{1.8cm}{GAR_2}\\

   \dessin{.9375cm}{SR_1} \stackrel{\SR}{\longleftrightarrow}
      \dessin{.9375cm}{SR_2}
    &\hspace{.5cm}\leftrightsquigarrow\hspace{.5cm}&
                     \dessin{1.8cm}{GSR_1} \stackrel{\SR}{\longleftrightarrow}
      \dessin{1.8cm}{GSR_2}\\

\dessin{1.5cm}{F_3} \stackrel{\F}{\longleftrightarrow}
        \dessin{1.5cm}{F_4}
    &\hspace{.5cm}\leftrightsquigarrow\hspace{.5cm}&
                     \dessin{1.8cm}{GF_1} \stackrel{\F}{\longleftrightarrow}
      \dessin{1.8cm}{GF_2}\\

\dessin{1.5cm}{wSharp_1} \stackrel{\ \wSharp}{\longleftrightarrow}
        \dessin{1.5cm}{wSharp_2}
    &\hspace{.5cm}\leftrightsquigarrow\hspace{.5cm}&
                     \dessin{1.8cm}{GwSharp_1} \stackrel{\ \wSharp}{\longleftrightarrow}
      \dessin{1.8cm}{GwSharp_2}
  \end{array}
  \]
  \caption{Non classical local moves}
  \label{fig:WeldedLocMoves}
\end{figure}

The \emph{virtualization} move $\V$ ---and its self-connected refinement $\SV$---, simply replaces a classical crossing, resp. self-crossing, by a virtual one, or vice-versa. 
It is fairly obvious that $\V$ is an unknotting operation for welded knotted objects. 

The \emph{virtual conjugation} move $\AR$ is best known in the literature ---where it is usually refered to as the virtualization move--- under the form given in Figure \ref{fig:WeldedLocMoves0}. In this paper, it shall be convenient to use the equivalent reformulation given in Figure \ref{fig:WeldedLocMoves}. 

The \emph{sign reversal} move $\SR$ is a composition of the $\AR$ and $\CC$ moves. 

From the Gauss diagram point of view, the moves $\V$, $\SV$, $\AR$ and $\SR$ are the  simplest and most natural local moves, since they all involve a single arrow which is modified by, respectively, being removed/added, having its orientation reversed or having its sign reversed. 

Note that the over-commute move $\OC$ of Figure \ref{fig:AllReidMoves} can be interpreted as allowing adjacent tails to cross one each other. Similarly, move $\F$ can be seen as  a  commutation between a tail and an adjacent head.  
Furthermore, as we shall see later, this move is equivalent, up to $\OC$, to the under-commute move $\UC$, so that it actually allows any pair of adjacent endpoints to commute. In other words, $\F$ defines the \emph{fused} quotient of welded objects, introduced by Kauffman and Lambropoulou in \cite{KL1,KaL}.

The move $\wSharp$ can be seen as a welded analogue of the
classical $\Sharp$ move. 
Note that its Gauss diagram incarnation
should require some sign restrictions, but as we shall prove in a
diagrammatical way that $\SR$ can be realized using $\wSharp$, they
can be released.
\begin{remarque}
 The $\wSharp$ move may appear asymmetric ---and thus unnatural--- to the reader, but turns out to be the simplest candidate for a welded analogue
of the $\Sharp$ move, in the sense of Theorem \ref{th:LocMovesOrder} and Proposition \ref{prop:wsharp_sharp}. 
As a matter of fact, one can consider the following more symmetric version:
\[
\dessin{1.5cm}{wSharp_3} \longleftrightarrow
        \dessin{1.5cm}{wSharp_4}
    \hspace{.5cm}\leftrightsquigarrow\hspace{.5cm}
                     \dessin{1.8cm}{GwSharp_3} \longleftrightarrow
      \dessin{1.8cm}{GwSharp_4}
\] 
and show that it is actually equivalent to $\wSharp$; in practice, however, such a symmetrized version of $\wSharp$ is less convenient, 
since it involves more crossings. 
\end{remarque}

\medskip

To conclude this section, we introduce some generic notation.

\begin{nota}
  For any local move $\mu$, we denote by $\wSLn^\mu$ the quotient of $\wSLn$ under the move $\mu$.\\
  If $\mu$ is classical, we furthermore denote by $\SLn^\mu$ the quotient of $\SLn$ under the move $\mu$.\\
  We shall use similar notation for classical and welded pure braids and links.
\end{nota}

\subsection{Relation between local moves}
\label{sec:Relations}

\begin{defi}
    Let $\M_1$ and $\M_2$ be two local moves.\\
 We say that $\M_2$ \emph{w--generates} $\M_1$ if $\M_1$ can be realized
  using $\M_2$ and welded Reidemeister moves. We denote it by 
  $M_2\stackrel{w}{\Rightarrow} M_1$. 
  If $M_2\stackrel{w}{\Rightarrow} M_1$ 
  and $M_1\stackrel{w}{\Rightarrow} M_2$, 
  then we say that $\M_1$ and $\M_2$ are \emph{w--equivalent}.\\
  If $\M_1$ and $\M_2$ are classical, then we say that $\M_2$ \emph{c--generates} $\M_1$ if $\M_1$ can be realized
  using $\M_2$ and classical Reidemeister moves. We denote it by 
  $M_2\stackrel{c}{\Rightarrow} M_1$. 
  If $M_2\stackrel{c}{\Rightarrow} M_1$ 
  and $M_1\stackrel{c}{\Rightarrow} M_2$, 
  then we say that $\M_1$ and $\M_2$ are \emph{c--equivalent}.
\end{defi}

\begin{remarque}
  The inclusion $\plong{\iota}{\SLDn}{\vSLDn}$ induces a well defined
  map $\func{\iota_*}{\SLn^{\M_c}}{\wSLn^{\M_w}}$ whenever
  $\M_w \stackrel{w}{\Rightarrow}\M_c$ with $\M_c$ a classical local move.
But the induced
  map is, in general, not injective. This shall be one of the main
  motivation for Definition \ref{def:Extension}.  
\end{remarque}

\begin{prop}\label{prop:F<=>UC}
  Move $\F$ is w--equivalent to the following under-commute move
\[
    \dessin{1.2cm}{UC_1} \stackrel{\UC}{\longleftrightarrow}
      \dessin{1.2cm}{UC_2}
    \hspace{.5cm}\leftrightsquigarrow\hspace{.5cm}
                     \dessin{1.5cm}{GUC_1} \stackrel{\UC}{\longleftrightarrow}
      \dessin{1.5cm}{GUC_2}.
\]
\end{prop}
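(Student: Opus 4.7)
The plan is to establish both w-generation directions: $\F \stackrel{w}{\Rightarrow}\UC$ and $\UC\stackrel{w}{\Rightarrow}\F$. I would work primarily in the Gauss diagram model, where the three moves admit a very clean description: $\OC$ swaps two adjacent arrow tails sitting on different strands, $\UC$ swaps two adjacent arrow heads on different strands, and $\F$ swaps an adjacent tail-head pair on different strands. In this formulation, the proposition asserts that, modulo $\OC$ and the other welded Reidemeister moves (in particular R2), the tail-head transposition generates the head-head transposition and conversely.

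For the direction $\F\stackrel{w}{\Rightarrow}\UC$, consider two adjacent heads $h_\alpha$ and $h_\beta$ on strands $i$ and $j$ that we wish to swap. I would first apply an R2 move to insert, just above $h_\alpha$ and $h_\beta$, a cancelling pair of arrows $\gamma^+$, $\gamma^-$ with opposite signs, both having tails on strand $i$ and heads on strand $j$ (or the reverse). The tails of $\gamma^\pm$ are now adjacent to $h_\alpha$ and the heads of $\gamma^\pm$ are adjacent to $h_\beta$, so one can apply $\F$ to commute $h_\alpha$ past the tails of $\gamma^\pm$, then $\OC$ (which is a welded Reidemeister move) to rearrange the two tails relative to each other, and finally $\F$ again on the other side. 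Undoing the R2 that produced the cancelling pair then leaves exactly the configuration with $h_\alpha$ and $h_\beta$ exchanged, which is the $\UC$ move.

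The converse direction $\UC\stackrel{w}{\Rightarrow}\F$ follows by a symmetric argument: given an adjacent tail-head pair to be swapped, one inserts an R2 cancelling pair in a position that makes $\UC$ (together with $\OC$) applicable, performs the relevant head-head and tail-tail transpositions, and cancels the pair. The translation back into the virtual diagram formulation is direct once the Gauss diagram argument is in place.

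The main obstacle is the sign and orientation bookkeeping, in particular verifying that the cancelling pair produced by the R2 move has signs compatible with the subsequent application of $\F$, $\OC$ or $\UC$ (each of which has its own sign/orientation convention in the Gauss diagram picture). This reduces to observing that R2 always inserts oppositely signed arrows with matching endpoints, which is exactly what is needed, so no case distinction on signs actually arises. The Gauss diagram viewpoint thus turns what might look like a picture-heavy argument into a short combinatorial verification.
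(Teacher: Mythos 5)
There is a genuine gap here, and it is fatal to both directions as you have set them up: your move sequences never invoke the classical $\RT$ move, and without it the required exchange cannot happen. Concretely, in your proposed realization of $\UC$ from $\F$, the two heads $h_\alpha$ and $h_\beta$ persist throughout the sequence (only the auxiliary pair $\gamma^\pm$ is created and destroyed), and none of the moves you allow yourself can ever interchange two heads: $\OC$ exchanges two tails, $\F$ exchanges a tail with a head, and $\RD$ only inserts or deletes a cancelling pair without moving any other endpoint. Hence at the end of your sequence $h_\alpha$ and $h_\beta$ sit on their strand in the same relative order as at the start, so $\UC$ has not been performed; moreover, if you trace the sequence explicitly, after the two $\F$ moves and the $\OC$ the tails of $\gamma^\pm$ are separated by $h_\beta$, so the closing $\RD$ cannot even be applied. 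The symmetric objection kills the converse direction, since $\OC$ and $\UC$ never exchange a tail with a head. There is also a geometric confusion in your setup: two adjacent heads necessarily lie on the same strand, so a cancelling pair ``with tails on strand $i$ and heads on strand $j$'' cannot have its tails adjacent to $h_\alpha$ and its heads adjacent to $h_\beta$.

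The paper's proof routes through the classical $\RT$ move, and this is the essential ingredient you are missing: in Gauss diagram terms, $\RT$ is the only welded Reidemeister move capable of exchanging two adjacent like endpoints of the wrong type (subject to its sign and orientation condition), and the real content of the proof is to build, via $\RD$ and $\OC$, a triangle configuration in which $\RT$ applies, then to perform $\UC$ (resp.\ $\F$) and undo the auxiliary moves. Accordingly, your claim that ``no case distinction on signs actually arises'' addresses the wrong constraint: the opposite signs produced by $\RD$ are not the issue; the sign restriction of $\RT$ is, and the paper explicitly checks that it can be met by choosing the sign of the $\RD$-created pair and the orientation of the piece of strand carrying the relevant tail.
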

\begin{proof}
  From the Gauss diagram point of view, $\F$ can be realized as:
\[
\dessin{1.5cm}{GAC_1}\ \xrightarrow[\RD]{}\ \dessin{1.5cm}{GAC_2}\ \xrightarrow[\OC]{}\ \dessin{1.5cm}{GAC_3}\ \xrightarrow[\RT]{}\ \dessin{1.5cm}{GAC_4}\ \xrightarrow[\UC]{}\ \dessin{1.5cm}{GAC_5}\ \xrightarrow[\RD]{}\ \dessin{1.5cm}{GAC_6}.
\]
Note that the restrictions on signs requested to perform the $\RT$
move can be fulfilled since we are free to choose the value of
$\gamma$ and free to choose the orientation of the piece of strand
that supports the tail of the $\varepsilon$--labeled arrow.

Conversely, $\UC$ can be similarly realized using $\F$.
\end{proof}
It follows, in particular, that $\wSLn^\F$ is actually the fused quotient studied in \cite{KL1,KaL} where both ``forbidden moves'' are allowed. From the Gauss diagram point of view, it has also the following consequence.
\begin{cor}\label{cor:Commutation}
  Up to $\F$, two arrow ends which are consecutive on a strand can be exchanged. Consequently, for Gauss diagram representatives of elements in $\wSLn^\F$, ends of arrows can be moved freely along a strand, so that the only relevant information is the strands it starts from and goes to. 
\end{cor}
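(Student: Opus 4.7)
The plan is to observe that on a strand, two consecutive arrow endpoints can be of four types: tail--tail, head--head, tail--head, and head--tail. I will exhibit, for each case, a sequence of moves that swaps them, using only $\F$, $\OC$, and welded Reidemeister moves. Since Proposition \ref{prop:F<=>UC} has just shown that $\F$ is w--equivalent to $\UC$, both $\F$ and $\UC$ are available in addition to the welded Reidemeister moves (in particular $\OC$).

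First, a pair of consecutive tails is exchanged by a direct application of $\OC$, and a pair of consecutive heads is exchanged by $\UC$, which is nothing but $\F$ up to welded Reidemeister moves by the preceding proposition. For a consecutive tail--head, move $\F$ directly swaps them (as described in the paragraph preceding the corollary, ``move $\F$ can be seen as a commutation between a tail and an adjacent head''), and the head--tail case is just the reverse direction of the same move. Since the $\F$ move in Figure \ref{fig:WeldedLocMoves} does not impose any sign restrictions on the arrows involved, and since we are free to apply $\OC$ and $\UC$ without sign restrictions either, all four cases are handled regardless of signs and orientations of the arrows.

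Once every pair of adjacent endpoints on a strand can be transposed, a straightforward iteration of such transpositions allows any endpoint to be slid past any of its neighbors, and hence to be moved to any prescribed position on its strand without altering any other local data of the Gauss diagram. Consequently, the combinatorial data remaining after quotienting by $\F$ is precisely: for each arrow, its sign, the strand carrying its tail, and the strand carrying its head.

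I do not expect a real obstacle here: the only mild subtlety is ensuring that all four types of swaps are indeed allowed without extra sign or orientation hypotheses, which is immediate from the form of $\F$, $\OC$, and $\UC$ as drawn.
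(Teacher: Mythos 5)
Your argument is correct and is essentially the one the paper intends: the corollary is stated as an immediate consequence of Proposition \ref{prop:F<=>UC}, with $\OC$ handling adjacent tails, $\F$ handling a tail adjacent to a head, and $\UC$ (which $\F$ w--generates) handling adjacent heads, exactly as you lay out. Your explicit four-case check and the remark on the absence of sign restrictions match the paper's reasoning, which it leaves implicit in the surrounding discussion.
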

\begin{prop}\label{prop:ARCC=>F}
  Both $\AR$ and $\CC$ w--generate $\F$.
\end{prop}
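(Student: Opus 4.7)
The proposition has two claims: $\AR\stackrel{w}{\Rightarrow}\F$ and $\CC\stackrel{w}{\Rightarrow}\F$. My plan is to treat both in parallel, working entirely in the Gauss diagram picture, where the $\F$ move simply exchanges an adjacent tail and head on a single strand (with the endpoints connected to arbitrary strands by arrows of arbitrary signs). Thus it suffices to exhibit, for each of $\AR$ and $\CC$, a sequence of welded Reidemeister moves combined with applications of that single move which swaps the two endpoints.

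The key observation underlying both cases is that $\AR$ and $\CC$ are both single--arrow moves that reverse the direction of the chosen arrow on the Gauss diagram: $\AR$ preserves the arrow's sign, while $\CC$ additionally flips it. (This convention is forced by the statement given in the text that $\SR$ is the composition of $\AR$ and $\CC$: since $\SR$ is sign reversal and $\AR$ is direction reversal, $\CC$ must be the combination of the two.) Given the $\F$ configuration---a head endpoint $E_1$ at position $p_1$ and a tail endpoint $E_2$ at position $p_2>p_1$ on some strand $S$---the three--step procedure runs as follows. First, apply $\AR$ (resp.\ $\CC$) to the arrow carrying $E_1$ to reverse its direction; the head endpoint at $p_1$ is now a tail endpoint, so that $S$ carries two adjacent tail endpoints. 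Second, apply the welded Reidemeister move $\OC$ to commute these two tails on $S$. Third, apply $\AR$ (resp.\ $\CC$) once more to the same arrow, restoring its original direction (and, in the $\CC$ case, its original sign, as the two sign flips cancel). The composition exchanges $E_1$ and $E_2$ on $S$ and leaves the rest of the diagram unchanged, which is exactly the $\F$ move.

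I do not expect a real obstacle. The $\OC$ move has no sign restriction on the two tails it permutes, so both cases go through uniformly regardless of the signs of the two arrows originally involved, and of the sign flip temporarily introduced by $\CC$. The only minor point to verify is that the first and third applications of the single--arrow move act on the \emph{same} arrow, which is immediate from the locality of the construction. Consequently, the argument is essentially the same for $\AR$ and $\CC$, and the only substantive step is the reduction from the ``forbidden'' head--tail commutation of $\F$ to the allowed tail--tail commutation of $\OC$, made possible by the direction--reversing nature of both $\AR$ and $\CC$.
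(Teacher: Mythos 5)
Your proof is correct and is essentially the paper's own argument: the paper realizes $\F$ by the exact same three-step sequence (apply $\AR$ or $\CC$ to reverse the arrow whose head is involved, commute the resulting pair of adjacent tails with $\OC$, then reverse the arrow back), treating the two cases in parallel just as you do. Nothing further is needed.
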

\begin{proof}
  From the Gauss diagram point of view, $\F$ can be realized as:
\[
\dessin{1.5cm}{GFAR_1}\ \xrightarrow[\AR\textrm{ or }\CC]{}\ \dessin{1.5cm}{GFAR_2}\ \xrightarrow[\OC]{}\ \dessin{1.5cm}{GFAR_3}\ \xrightarrow[\AR\textrm{ or }\CC]{}\ \ \ \dessin{1.5cm}{GFAR_4}.
\]
\end{proof}

\begin{prop}\label{prop:wS=>SR}
  The move $\wSharp$ w--generates $\SR$.
\end{prop}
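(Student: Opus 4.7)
The plan is to work at the level of Gauss diagrams and realize the sign reversal of a single arrow as a combination of one $\wSharp$ move with several Reidemeister moves, exactly as was done for the $\F$ move in the proofs of Propositions~\ref{prop:F<=>UC} and~\ref{prop:ARCC=>F}. The general philosophy is the standard one: to effect a ``small'' local modification (flipping the sign of one arrow) using a ``large'' local move (which rewrites four arrows at once), one first inflates the single arrow into a four-arrow configuration by applying $\RD$ to introduce auxiliary cancelling pairs of arrows in a neighborhood of the target arrow, then performs the big move once, and finally collapses the resulting configuration back by killing the auxiliary arrows with further applications of $\RD$.

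More concretely, fix an arrow $\alpha$ of sign $\varepsilon$ between two strands, and focus on the local picture near its tail and its head. First I would apply $\RD$ three times in this neighborhood to create three additional arrows, each paired with a partner that ensures each pair can be removed later by another $\RD$; the signs and the positions of these auxiliary arrows are to be chosen so that, after bringing their endpoints into the correct cyclic order along the relevant strand portions via $\OC$ and a possible $\RT$ (as in the proofs of Propositions~\ref{prop:F<=>UC} and~\ref{prop:ARCC=>F}), the four-arrow configuration around $\alpha$ matches exactly the source of the Gauss diagram form of $\wSharp$ depicted in Figure~\ref{fig:WeldedLocMoves}. Then I would apply $\wSharp$ once: by inspection of the target of the move, the resulting four-arrow configuration contains an arrow in the same position as $\alpha$ but carrying sign $-\varepsilon$, together with three arrows that again form cancelling pairs. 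Three applications of $\RD$ (together with the required $\OC$ moves to make them applicable) then remove these three auxiliary arrows, leaving precisely the Gauss diagram obtained from the initial one by $\SR$ applied to $\alpha$.

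The main obstacle will be the sign and orientation bookkeeping: the Gauss-diagram avatar of $\wSharp$ is sensitive to the orientations of the strand portions and to the signs of the four arrows, so one must verify case by case ---for each choice of $\varepsilon \in \{\pm1\}$ and for each of the four relative orientations of the two strand portions supporting $\alpha$--- that the auxiliary arrows can indeed be introduced in such a way that both the pre-$\wSharp$ picture matches a valid occurrence of $\wSharp$ and the post-$\wSharp$ picture collapses via $\RD$. The freedom provided by $\RD$, which introduces pairs of arrows whose signs may be chosen at will by orienting the bigon appropriately, together with the flexibility of $\OC$ for relocating arrow ends, should supply enough room to handle all the cases uniformly.
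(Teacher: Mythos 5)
Your overall strategy is the same as the paper's: realize $\SR$ by inflating the single crossing with Reidemeister-type moves, applying $\wSharp$ exactly once, and collapsing back. The paper's proof is precisely the sequence wReid's $\to$ $\wSharp$ $\to$ wReid's. The difference is that the paper deliberately carries this out \emph{from the virtual diagram point of view}, and that choice is not cosmetic: since the local move $\wSharp$ is specified on unoriented pieces of strand, the virtual-diagram picture requires no sign or orientation bookkeeping at all, whereas you elect to work with Gauss diagrams, where that bookkeeping becomes the entire content of the proof.

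This is where your proposal has a genuine gap, and a latent circularity. The paper states explicitly, just before introducing $\wSharp$, that its Gauss diagram incarnation ``should require some sign restrictions, but as we shall prove in a diagrammatical way that $\SR$ can be realized using $\wSharp$, they can be released.'' In other words, the unrestricted Gauss-diagram form of $\wSharp$ drawn in Figure~\ref{fig:WeldedLocMoves} is only legitimate \emph{as a consequence of} Proposition~\ref{prop:wS=>SR}; if you match your four-arrow configuration against that unrestricted picture, you are assuming what you are trying to prove. To make your route work you must use the sign-restricted Gauss-diagram version of $\wSharp$ (the one actually realized by the unoriented virtual-diagram move) and then verify, for each $\varepsilon\in\{\pm1\}$ and each relative orientation of the two strand portions carrying $\alpha$, that the auxiliary $\RD$-pairs can be chosen so that both the pre-move configuration is a legal instance and the post-move configuration collapses. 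You name this as ``the main obstacle'' and then assert that the freedom in $\RD$ and $\OC$ ``should supply enough room,'' but you do not carry out any of the cases; as written the argument is a plausible plan rather than a proof. The simplest repair is the paper's: draw the sequence once in the virtual-diagram picture, where orientations play no role and a single chain of welded Reidemeister moves around one application of $\wSharp$ suffices.
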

\begin{proof}
  From the virtual diagram point of view, $\SR$ can be realized as:
\[
\dessin{.9cm}{SRD_1}\ \xrightarrow[\textrm{wReid's}]{}\ \dessin{3.1cm}{SRD_2}\ \xrightarrow[\wSharp]{}\ \dessin{3.1cm}{SRD_3}\ \xrightarrow[\textrm{wReid's}]{}\ \ \ \dessin{.9cm}{SRD_4}.
\]
\end{proof}

Now we state the first theorem of this section, which emphasizes the parallel between the classical and the welded realms.

\begin{theo}\label{th:LocMovesOrder}
The following diagram holds:
\[\vcenter{\hbox{$\xymatrix@!0@C=.6cm@R=1.1cm{
\CC \ar@{}[rr]|{\cprecc} \ar@{}[rd]|{\vprecc}&& \Sharp \ar@{}[rr]|{\cprecc} \ar@{}[rd]|{\vprecc}&& \Delta \ar@{}[rr]|{\cprecc} \ar@{}[rd]|{\vprecc}&& \SC \ar@{}[rd]|{\vprecc}&\\
&\V \ar@{}[rr]|{\wprecc}&& \wSharp \ar@{}[rr]|{\wprecc}&& \F \ar@{}[rr]|{\wprecc} &&\SV
}$}}.
\]
\end{theo}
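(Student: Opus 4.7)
The theorem asserts ten generation relations, which I would organize into three groups.

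\emph{Top (classical) chain $\CC \cprecc \Sharp \cprecc \Delta \cprecc \SC$.} The relation $\CC \cprecc \Sharp$ is direct, since the $\Sharp$ move alters a specific set of classical crossings in its tangle and can thus be realized by performing a $\CC$ at each of them. The two remaining relations, $\Sharp \cprecc \Delta$ and $\Delta \cprecc \SC$, are classical results due to Murakami--Nakanishi~\cite{MN}, which I would cite rather than reprove.

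\emph{Vertical relations (welded w-generates classical).} For $\V \vprecc \CC$ and $\SV \vprecc \SC$, I would apply (self-)virtualization at a classical crossing to turn it virtual, then apply it again in reverse to recover a classical crossing of the opposite sign; the composition realizes a (self-)crossing change. For $\wSharp \vprecc \Sharp$, I would use vR2 moves to insert virtual crossings around a classical $\Sharp$ source tangle, reaching a $\wSharp$ configuration, apply $\wSharp$, and finally eliminate the extra virtual crossings by vR2. For $\F \vprecc \Delta$, I would pass to the Gauss-diagram picture and apply Corollary~\ref{cor:Commutation}: since $\F$ allows free commutation of adjacent arrow endpoints along a strand, and the Gauss-diagram version of $\Delta$ is exactly a rearrangement of three arrow endpoints (one per strand), it is realized by a few applications of $\F$.

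\emph{Bottom (welded) chain $\V \wprecc \wSharp \wprecc \F \wprecc \SV$.} For $\V \wprecc \wSharp$, I would virtualize via $\V$ all classical crossings appearing in the $\wSharp$ tangle, so that both sides of the move consist only of virtual crossings and become related by welded Reidemeister moves alone. For $\F \wprecc \SV$, given any self-arrow in a Gauss diagram, I would invoke Corollary~\ref{cor:Commutation} to bring its two endpoints adjacent on the strand, then apply an R1 move (in the appropriate sign/orientation variant) to remove it. The main obstacle is $\wSharp \wprecc \F$: neither Proposition~\ref{prop:wS=>SR} (which yields only $\wSharp \wprecc \SR$) nor Proposition~\ref{prop:ARCC=>F} (which requires $\AR$ or $\CC$, rather than $\SR$) applies directly, so I expect an explicit combinatorial construction is needed, presumably a direct sequence of $\wSharp$ moves and welded Reidemeister moves realizing a head-tail commutation, exploiting the virtual crossings already present in the $\wSharp$ source configuration.
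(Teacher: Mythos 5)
Your handling of the top chain, of $\V \stackrel{w}{\Rightarrow} \CC$, $\SV \stackrel{w}{\Rightarrow} \SC$, $\F \stackrel{w}{\Rightarrow} \Delta$, $\F \stackrel{w}{\Rightarrow} \SV$ and $\V \stackrel{w}{\Rightarrow} \wSharp$ matches the paper's proof (up to a minor attribution point: $\Sharp \stackrel{c}{\Rightarrow} \Delta$ is obtained there by combining results of Nakanishi and Aida, not of Murakami--Nakanishi). However, two of the ten relations are not actually established. First, your argument for $\wSharp \stackrel{w}{\Rightarrow} \Sharp$ fails: the source tangles of $\Sharp$ and $\wSharp$ do not differ merely by virtual crossings that vR2 moves could create and cancel --- they differ by one application of the sign-reversal move $\SR$, which flips the sign of a classical crossing (equivalently, of an arrow in the Gauss diagram), and no virtual Reidemeister move can do that. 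The paper's route is to first prove $\wSharp \stackrel{w}{\Rightarrow} \SR$ by an explicit diagrammatic sequence (Proposition \ref{prop:wS=>SR}) and only then deduce $\wSharp \stackrel{w}{\Rightarrow} \Sharp$; you cite that proposition but set it aside, when it is precisely the missing ingredient for this step.

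Second, you leave $\wSharp \stackrel{w}{\Rightarrow} \F$ unproved, and this is the main technical content of the theorem. The paper's argument is a genuine two-stage construction: it first shows that $\wSharp$ w-generates the 4-move, via a welded analogue of Murakami's classical band argument, then combines the 4-move with $\wSharp$ and welded Reidemeister moves to w-generate an intermediate local move $\M$, and finally realizes $\F$ (through the under-commute move $\UC$, which is w-equivalent to $\F$ by Proposition \ref{prop:F<=>UC}) by an $\RD$ insertion followed by two applications of $\M$. Your expectation that some explicit combinatorial sequence realizes a head--tail commutation is correct in spirit, but without exhibiting it --- or the 4-move intermediary that makes it tractable --- the proof of the bottom chain, and hence of the theorem, is incomplete.
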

\begin{proof}
  The upper line gathers classical known facts:
  \begin{itemize}
  \item $\CC \stackrel{c}{\Rightarrow} \Sharp$ is obvious;
  \item $\Delta \stackrel{c}{\Rightarrow} \SC$ is a classical fact, proved in \cite[Lemma 1.1]{MN};
  \item $\Sharp \stackrel{c}{\Rightarrow} \Delta$ is proved by combining \cite[Prop 1]{nakanishi2} and \cite[Lemma 2]{Aida};
        more precisely, Lemma 2 of \cite{Aida} shows (when ignoring orientations) that $\Sharp$ w--generates the following local move: 
        \[ \dessin{1.2cm}{Del_1} \ \longleftrightarrow \dessin{1.2cm}{Del_3}, \]
        while \cite[Prop 1 (5)]{nakanishi2} proves that the above local move w--generates $\Delta$. 
   \end{itemize}

  The statements $\SV \stackrel{w}{\Rightarrow} \SC$, $\V \stackrel{w}{\Rightarrow} \CC$ and $\V \stackrel{w}{\Rightarrow} \wSharp$ are direct consequences of the fact that virtualization $\V$ and $\SV$ moves allow to remove any arrow and reinsert it with reversed sign. 

  Since $\Sharp$ and $\wSharp$ differ by one application of $\SR$, the statement $\wSharp \stackrel{w}{\Rightarrow} \Sharp$ is a corollary of Proposition \ref{prop:wS=>SR}.

  From the Gauss diagram point of view, $\Delta$ modifies the relative positions of three arrows. 
  It follows hence from Corollary \ref{cor:Commutation} that $\F \stackrel{w}{\Rightarrow} \Delta$.
  For the same reason $\F \stackrel{w}{\Rightarrow} \SV$, since both ends of a self-arrow can be made adjacent and the self-arrow removed using $\RU$.

  To prove the last statement $\wSharp \stackrel{w}{\Rightarrow} \F$, we first note that $\wSharp$ w--generates the 
 \emph{4-move}:\footnote{it is still an open problem, known as the 4-move conjecture and first posed by Y.~Nakanishi \cite{nakanishi}, whether this move is an unknotting operation on classical knots.}
\[
 \dessin{.75cm}{Clasp_1}\  \stackrel{\Clasp}{\longleftrightarrow}
      \dessin{.75cm}{Clasp_2}
    \hspace{.5cm}\leftrightsquigarrow\hspace{.5cm}
                     \dessin{1.5cm}{GClasp_1}\  \stackrel{\Clasp}{\longleftrightarrow}
      \dessin{1.5cm}{GClasp_2}.
\]
Indeed, we have the following, which can be seen as a welded analogue of \cite[Fig 8]{Murakami}: 
\[
\dessin{.75cm}{wBP_4M_1}\ \xrightarrow[\textrm{wReid's}]{}\ \dessin{1.5cm}{wBP_4M_2}\ \xrightarrow[\wSharp]{}\ \dessin{1.5cm}{wBP_4M_3}\ \xrightarrow[\textrm{wReid's}]{} \ \ \ 
\dessin{.75cm}{wBP_4M_4}.
\]
The following sequence
\[
\dessin{2cm}{wSM_1}\ \xrightarrow[\textrm{wReid's}]{}\ \dessin{2cm}{wSM_2}\ \xrightarrow[\wSharp]{}\ \dessin{2cm}{wSM_3}\ \xrightarrow[\Clasp]{} \ \ \ 
\dessin{2cm}{wSM_4}\ \xrightarrow[\textrm{wReid's}]{}\ \dessin{2cm}{wSM_5}.
\]
proves hence that $\wSharp$ w--generates the following move:
\[
\dessin{1.2cm}{M_1} \ \stackrel{\M}{\longleftrightarrow}
        \dessin{1.2cm}{M_2}
    \hspace{.5cm}\leftrightsquigarrow\hspace{.5cm}
                     \dessin{1.5cm}{GM_1}\ \stackrel{\M}{\longleftrightarrow}
      \dessin{1.5cm}{GM_2}.
\]
The following sequence, together with Proposition \ref{prop:F<=>UC}, then concludes  the proof:
\[
\dessin{1.5cm}{MF_1}\ \xrightarrow[\RD]{}\ \dessin{1.8cm}{MF_2}\ \xrightarrow[\M]{}\ \dessin{1.8cm}{MF_3}\ \xrightarrow[\textrm{wReid's}]{}\ \ \ \dessin{1.8cm}{MF_4}\ \xrightarrow[\M]{}\ \dessin{1.8cm}{MF_5}\ \xrightarrow[\RD]{}\ \dessin{1.5cm}{MF_6}.
\]
\end{proof}

Note that if a statement
$\M_2 \stackrel{c}{\Rightarrow} \M_1$, for $\M_1$ and $\M_2$ some classical local moves, is proved by realizing locally $\M_1$ using $\M_2$,
then it automatically follows that $\M_2 \stackrel{w}{\Rightarrow} \M_1$. For instance, it
follows directly from the proofs in the classical case that
$\CC \stackrel{w}{\Rightarrow} \Sharp \stackrel{w}{\Rightarrow} \Delta$. On the contrary, the proof that $\Delta \stackrel{c}{\Rightarrow} \SC$ is
not local and promoting it to the welded realm requires some
attention.  

\begin{theo}\label{th:Del=>SC}
The following relations hold: 
  $\CC \stackrel{w}{\Rightarrow} \Sharp \stackrel{w}{\Rightarrow} \Delta \stackrel{w}{\Rightarrow} \SC$.
\end{theo}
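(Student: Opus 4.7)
The first two implications, $\CC \stackrel{w}{\Rightarrow} \Sharp$ and $\Sharp \stackrel{w}{\Rightarrow} \Delta$, follow immediately from their classical counterparts as established in Theorem \ref{th:LocMovesOrder}. Indeed, both classical proofs are local: $\Sharp$ is realized by performing three crossing changes in a prescribed disk, and the arguments of Nakanishi \cite{nakanishi2} and Aida \cite{Aida} produce $\Delta$ by explicit local substitutions involving only $\Sharp$ and classical Reidemeister moves. As noted in the paragraph preceding the theorem, any such local classical construction is automatically valid in the welded setting, since the welded Reidemeister moves extend the classical ones.

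The heart of the theorem is the implication $\Delta \stackrel{w}{\Rightarrow} \SC$, which I plan to handle by adapting the non-local argument of Murakami and Nakanishi \cite[Lemma 1.1]{MN} to welded diagrams. Fix a welded string link diagram $D$ containing a self-crossing $c$ of strand $i$. The two preimages of $c$ cut strand $i$ into three sub-arcs; let $\gamma$ denote the middle one, viewed as an oriented path from the first preimage of $c$ to the second. The idea is to use $\Delta$ moves to push $\gamma$ past all the crossings that it meets along its way, thereby exchanging the over- and under-branches at $c$, which is exactly the self-crossing change. The core step is a passage lemma: for each crossing $d$ where $\gamma$ meets another arc, one combines $\Delta$ with welded Reidemeister moves to modify $D$ in a neighbourhood of $d$ so that $\gamma$ passes on the opposite side of $d$, at the cost of compensating crossings that can be managed inductively. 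For classical crossings $d$, this is precisely the content of the Murakami-Nakanishi passage step; for virtual crossings, the passage becomes transparent in the Gauss-diagram picture of Section \ref{sec:definitions}, since virtual crossings carry no arrow information and $\gamma$ can be slid freely across them using the mixed and virtual Reidemeister moves.

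The hardest part of the argument is the bookkeeping required to ensure that successive passage operations do not interfere: at each step the relevant complexity ---say, the number of arrow endpoints that $\gamma$ still has to be pushed past--- must strictly decrease, and the auxiliary arrows introduced along the way must be tracked so as not to obstruct later steps. This is cleanest at the Gauss-diagram level, where the effect of each welded Reidemeister and $\Delta$ move is entirely explicit and virtual crossings simply disappear from the picture. The induction will terminate in a configuration where $\gamma$ is disjoint from all arrow endpoints and the two ends of the self-arrow at $c$ have been made adjacent; at that point a short explicit sequence of $\Delta$, $\OC$ and classical Reidemeister moves in a neighbourhood of $c$ reverses the sign of the self-arrow, completing the proof.
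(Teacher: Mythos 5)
Your treatment of $\CC \stackrel{w}{\Rightarrow} \Sharp \stackrel{w}{\Rightarrow} \Delta$ matches the paper's: these are local classical constructions and hence promote automatically to the welded setting. The issue is with $\Delta \stackrel{w}{\Rightarrow} \SC$, where what you have written is a plan rather than a proof, and the plan leaves unresolved precisely the point where the welded setting differs from the classical one. In the Gauss diagram picture, the only obstruction to shrinking the middle arc $\gamma$ is passing an end of the self-arrow across a \emph{head}: tails commute past tails by $\OC$, but a tail passing a head is exactly the forbidden under-commute move, which by Proposition \ref{prop:F<=>UC} is w--equivalent to $\F$ --- something you cannot assume here. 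Your ``passage lemma'' is therefore not a routine adaptation of Murakami--Nakanishi; it is the entire content of the theorem, and you do not supply it. Relatedly, your proposed complexity measure (``the number of arrow endpoints that $\gamma$ still has to be pushed past'') is not the right one: since tails are free, the induction must be on the number of \emph{heads} between the two ends of the self-arrow (the paper's ``width'').

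The paper resolves the passage problem by a two-tier argument that your sketch does not anticipate. A head belonging to a non-interior arrow can be crossed by an $\RD$ followed by a $\Delta$ move (this is the one place where $\Delta$ is actually used). But a head belonging to an \emph{interior} self-arrow $b$, with both ends between the ends of $a$, cannot be handled this way; instead one applies the induction hypothesis to $b$ itself --- $b$ has strictly smaller width --- to perform a self-crossing change on $b$, turning its head into a tail so that $b$ can be removed before recursing on $a$. This nested use of the induction hypothesis is the key idea, and without it your induction does not close. Finally, the Gauss-diagram versions of $\Delta$ and $\RT$ carry sign and orientation restrictions; the paper must check (Figure \ref{fig:RestrictionsIdem}) that the restrictions for the forward passage and for its reversal in the final step are simultaneously satisfiable. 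Your sketch does not address this, and it is not automatic.
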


\begin{proof}
  As already noted, only the relation $\Delta \stackrel{w}{\Rightarrow} \SC$ needs to be proved. We
  shall adopt the Gauss diagram point of view. Let $a$ be a self-arrow
  on which one wants to realize a crossing change. We shall proceed by induction on the \emph{width} of $a$, which is defined as the number of heads located on the
portion of strand in-between the two endpoints of $a$.

If $a$ has width zero, then there is no head between the endpoints of $a$, and the crossing change can be realized as:
  \[
\dessin{2.5cm}{NoHead_1}\ \xrightarrow[\OC\textrm{'s}]{}\ 
\dessin{2.5cm}{NoHead_2}\ \xrightarrow[\RU]{}\ 
\dessin{2.5cm}{NoHead_3}\ \xrightarrow[\RU]{}\ 
\dessin{2.5cm}{NoHead_4}\ \xrightarrow[\OC\textrm{'s}]{}\ 
\dessin{2.5cm}{NoHead_5}.
\]
Now, we assume that $a$ has width $d\in\N^*$, 
and that the statement is proven for self-arrows having width smaller than $d$. 
We call an \emph{interior} arrow any self-arrow which has both endpoints
located in the portion of strand between the endpoints of
$a$. There are hence two cases: 
\begin{description}
\item[There is an interior arrow $b$] then we proceed in three steps.
  \begin{description}
  \item[Step 1] Remove $b$ by pushing its tail next to its head, as follows.  
    Tails can be crossed using $\OC$. Heads from non interior arrows
    can be crossed using the sequence:
\[
\dessin{2.5cm}{CNIH_1}\ \xrightarrow[\RD]{}\ 
\dessin{2.5cm}{CNIH_2}\ \xrightarrow[\Delta]{}\ 
\dessin{2.5cm}{CNIH_3}.
\]
The restrictions on signs
    requested to perform the $\Delta$ move can be fulfilled since we
    are free to choose the signs of the arrows created with the $\RD$
    move and free to choose the orientation of the piece of strand
    that supports the tail of the non interior arrow.
  Heads from interior arrows can be crossed by using the induction
  hypothesis, which allows to turn them into tails using self-crossing changes;
  an interior arrow has indeed a strictly smaller width than $a$. The arrow $b$ can now be removed using $\RU$.
\item[Step 2]   
  Since none of the operations of Step 1 has increased the number of
  head between its endpoints, $a$ has now width $d-1$ and the induction hypothesis can be used to perform a
  self-crossing change on it.
\item[Step 3] The arrow $b$ can be placed back by performing Step 1 backwards. 
  \end{description}
\item[There is no interior arrow] then we also proceed in three steps.
  \begin{description}
  \item[Step 1] Push the tail of $a$ towards its head until it has crossed one head. 
    In doing so, the tail of $a$ first crosses a number of tails (of non interior arrows), 
    and we request that these are not crossed using $\OC$ 
    but using the sequence:
\[
\dessin{2.7cm}{CNIT_1}\ \xrightarrow[\RD]{}\ 
\dessin{2.7cm}{CNIT_2}\ \xrightarrow[\RT]{}\ 
\dessin{2.7cm}{CNIT_3}.
\]
The restrictions on signs
    requested to perform the $\RT$ move can be fulfilled since we
    are free to choose the signs of the arrows created with the $\RD$
    move and free to choose the orientation of the piece of strand
    that supports the head of the non interior arrow. Finally, the first head met by the tail of $a$ is crossed using the sequence given in Step 1 of the previous case.
  \item[Step 2] Since none of the operations of Step 1 has increased the number of
  head between its endpoints, $a$ has now width $d-1$ and the induction hypothesis can be used to perform a  self-crossing change on it.
\item[Step 3] The tail of $a$ can now be pushed back to its initial
  position by performing Step 1 backwards. It is indeed illustrated in
  Figure \ref{fig:RestrictionsIdem} that the $\Delta$ and $\RT$ moves performed in Step 1, and the corresponding 
    move performed in this final step,
    have sign restrictions which are simultaneously satisfied\footnote{this can also be trivially
    checked from the virtual diagram point of view}. 
    Some random orientations have been chosen for the strands in
    Figure \ref{fig:RestrictionsIdem}, but changing it would merely
    add a sign on both sides.
  \end{description}
  \begin{figure}
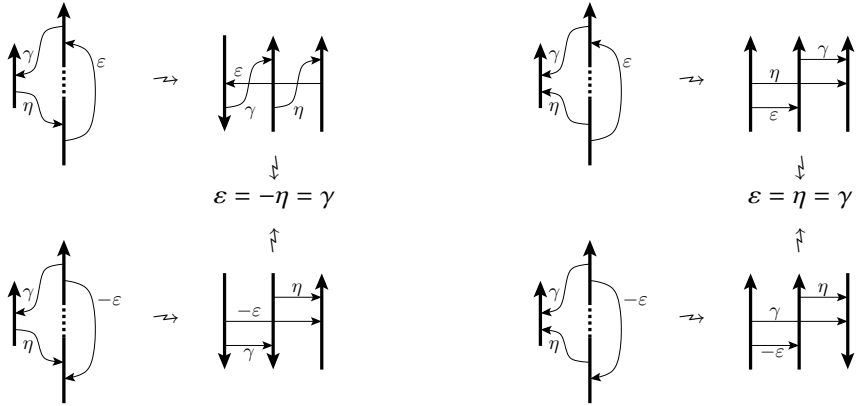

\[
\hspace{-.5cm}
    \begin{array}{ccc}
      \dessin{2.7cm}{CBSR_1_1}&\crashto&\dessin{1.8cm}{CBSR_1_2}\\[-.7cm]
       & & \crashdown\\[0.2cm]
       & & \varepsilon=-\eta=\gamma\\[0.2cm]
       & & \crashup\\[-.6cm]
      \dessin{2.7cm}{CBSR_2_1}&\crashto&\dessin{1.8cm}{CBSR_2_2}
    \end{array}
    \hspace{2cm}
    \begin{array}{ccc}
      \dessin{2.7cm}{CBSR_3_1}&\crashto&\dessin{1.8cm}{CBSR_3_2}\\[-.7cm]
       & & \crashdown\\[0.2cm]
       & & \varepsilon=\eta=\gamma\\[0.2cm]
       & & \crashup\\[-.6cm]
      \dessin{2.7cm}{CBSR_4_1}&\crashto&\dessin{1.8cm}{CBSR_4_2}
    \end{array}
\]
    \caption{Correspondence between sign restrictions}\label{fig:RestrictionsIdem}
  \end{figure}
\end{description}
\end{proof}

As observed in \cite[Lemma 4.4]{vaskho2}, $\SC$ is an unknotting operation on $\wSL_1$. 
Indeed, by $\SC$ and $\OC$ moves, any Gauss diagram of a long knot can be turned into a diagram where all arrows have adjacent endpoints, 
which is clearly trivial by R1.
It follows that Theorem \ref{th:Del=>SC} and Proposition \ref{prop:ARCC=>F} have the
following corollary.
\begin{cor}\label{cor:Unknotting}
  Moves $\Delta$, $\Sharp$, $\F$, $\AR$ and $\wSharp$ are all unknotting
  operations on welded long knots, and hence on welded knots.
\end{cor}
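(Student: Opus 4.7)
The plan is to combine the chain of w--generations established earlier in this section with the fact, recalled just above the corollary, that $\SC$ is an unknotting operation on $\wSL_1$. The key observation is a monotonicity principle: if $M_2 \stackrel{w}{\Rightarrow} M_1$ and $M_1$ is an unknotting operation on welded long knots, then so is $M_2$, since any sequence of $M_1$'s trivializing a diagram can be replayed by substituting each $M_1$ with its realization through $M_2$ and welded Reidemeister moves. Thus it suffices to check, for each of the five moves in the statement, that it w--generates $\SC$.

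Applying this principle, I would first dispatch the two classical cases: Theorem \ref{th:Del=>SC} gives the chain $\Sharp \stackrel{w}{\Rightarrow} \Delta \stackrel{w}{\Rightarrow} \SC$, so both $\Sharp$ and $\Delta$ unknot welded long knots. For the three non-classical moves, I would first observe that $\SV \stackrel{w}{\Rightarrow} \SC$ holds trivially, since two virtualizations applied in succession at the same self-crossing amount to a sign change on it. Combining this with the w--generations $\F \stackrel{w}{\Rightarrow} \SV$ and $\wSharp \stackrel{w}{\Rightarrow} \F$ from Theorem \ref{th:LocMovesOrder}, together with $\AR \stackrel{w}{\Rightarrow} \F$ from Proposition \ref{prop:ARCC=>F}, settles $\F$, $\wSharp$ and $\AR$ in turn.

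Finally, to promote the statement from welded long knots to welded knots, one recalls that every welded knot is the closure of some welded long knot and that the closure of the trivial long knot is the trivial welded knot; any trivializing sequence of moves on a long knot representative --- which, being local, may be assumed to avoid the cutting point --- therefore descends to a trivializing sequence on its closure. I do not anticipate any substantial obstacle: the proof is essentially an assembly of previously established facts, the only mild subtlety being the passage from long knots to knots, which is handled by this standard cut/close correspondence.
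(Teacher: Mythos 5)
Your proposal is correct and takes essentially the same route as the paper: both reduce each of the five moves to $\SC$ via the w--generation chains of Theorem \ref{th:LocMovesOrder}, Theorem \ref{th:Del=>SC} and Proposition \ref{prop:ARCC=>F}, and then invoke the fact that $\SC$ unknots welded long knots. Your explicit spelling-out of the monotonicity principle and of the closure argument for passing from long knots to knots only makes precise what the paper leaves implicit.
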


\begin{remarque}
  The  statement of Corollary \ref{cor:Unknotting} on welded knots has been also recently proved in a
  different way by S. Satoh \cite{Satoh2}.
\end{remarque}

A consequence of Corollary \ref{cor:Unknotting} is that, on one
strand ---that is for welded long knots--- all the local moves considered
in Theorem \ref{th:LocMovesOrder} are w--equivalent.
In the next section, we shall provide classification results which point out that, except in a few cases (see {\it e.g.} Corollary \ref{cor:SVF}), this is no longer true on more strands. However, $\wSLn^\Delta$ is not considered
there, and since it provides another example of w--equivalence on two strands, we address it now.
\begin{prop}\label{prop:SC=>Del}
  On two strands, $\Delta$ and $\SC$ are w--equivalent, i.e. we have  $\wSL_2^\Delta=\wSL_2^\SC$.  
\end{prop}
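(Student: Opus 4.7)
Since Theorem \ref{th:Del=>SC} already gives $\Delta \stackrel{w}{\Rightarrow} \SC$ on $\wSL_2$, yielding a natural surjection $\wSL_2^\SC \twoheadrightarrow \wSL_2^\Delta$, the plan is to establish the converse implication $\SC \stackrel{w}{\Rightarrow} \Delta$ on $\wSL_2$. I would proceed by a case analysis on how the three local pieces of strand involved in a given $\Delta$ move distribute between the two global strands: up to relabeling, either (A) all three local pieces belong to the same global strand, or (B) two belong to one global strand and the third to the other.

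Case A is handled first and is immediate. The $\Delta$ move in this case affects only the self-arrows of a single global strand, leaving the other strand and all mixed arrows untouched. By Corollary \ref{cor:Unknotting}, $\SC$ is unknotting on welded long knots, so the two portions of the involved strand appearing on either side of the $\Delta$ move are $\SC$-equivalent via a sequence of moves localized to that strand, and the mixed arrows stay in place throughout.

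Case B will be the main obstacle. Here the $\Delta$ move rearranges one self-arrow of strand $1$ together with two mixed arrows joining strand $1$ to strand $2$, and two difficulties combine: $\SC$ does not act directly on mixed arrows, and welded Reidemeister moves alone cannot commute two heads lying on the same strand (this would require the $\UC$ move of Proposition \ref{prop:F<=>UC}). My plan for this case is to adapt the inductive Gauss-diagram manipulations used in the proof of Theorem \ref{th:Del=>SC}: first use $\SC$ together with $\RU$, $\RD$, $\OC$ and $\RT$ to temporarily annihilate, or at least freely reposition, the self-arrow of strand $1$; then exploit the resulting room on strand $1$ —where only the two mixed endpoints effectively remain to be managed— to slide the mixed arrows into their post-$\Delta$ position, commuting their tails via $\OC$ and absorbing obstructions from their heads by introducing auxiliary $\RD$ pairs using strand $2$ as a buffer; finally restore a self-arrow of appropriate sign via $\SC$ and $\RU$.

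The hardest step will be to check that the cumulative effect on the mixed arrows faithfully realizes the $\Delta$ rearrangement, without leaving parasitic arrows or altering any arrow outside the local disk, and that the various sign conditions attached to $\RT$ and to the intermediate configurations can all be simultaneously satisfied, in the same spirit as Figure \ref{fig:RestrictionsIdem}. A further sub-case split, according to the relative positions of the two mixed heads and the self-arrow endpoints on strand $1$, may be needed to write the explicit sequence.
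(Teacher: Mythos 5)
Your reduction to proving $\SC \stackrel{w}{\Rightarrow} \Delta$ on two strands is the right first step, but the execution has a genuine gap, and you are missing the one observation that makes this proposition a three-line argument. Any $\Delta$ move involves three local pieces of strand, so on two global strands at least two of these pieces belong to the same component, and the crossing between them is a self-crossing. Changing that single crossing converts the cyclic over/under pattern of the $\Delta$ triangle into a consistent one on which $\RT$ applies; changing it back afterwards completes the $\Delta$ move. Thus $\Delta$ is realized as $\SC$, then $\RT$, then $\SC$, uniformly in both of your cases, with no induction, no case split, and no need to touch the mixed arrows. This is exactly the paper's proof.

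By contrast, your Case A is not justified as written: Corollary \ref{cor:Unknotting} is a global statement about welded long knots, and it neither lets you replace one local tangle by another inside a disk nor controls the interaction with mixed arrows, whose heads may sit between the self-arrow endpoints you want to manipulate (sliding a head past another head on the same strand is precisely the forbidden $\UC$ move). Your Case B is only a plan, with the decisive verifications (``check that the cumulative effect on the mixed arrows faithfully realizes the $\Delta$ rearrangement, without leaving parasitic arrows'') explicitly deferred; note moreover that any scheme allowing you to ``freely reposition'' mixed arrow endpoints would prove too much, since $\wSL_2^\F$ is a \emph{proper} quotient of $\wSL_2^\Delta$, as the remark following the proposition shows via the invariant $Q_2$. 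As it stands, the proposal does not constitute a proof.
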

\begin{proof}
  It has been proved in Theorem \ref{th:Del=>SC} that $\Delta \stackrel{w}{\Rightarrow}\SC$. 
  Conversely, any $\Delta$ move involves at least two pieces
  of strand which belong to the same strand. By performing a
  self-crossing change on the corresponding crossing before and after,
  the $\Delta$ move can then be replaced by an $\RT$.
\end{proof}
\begin{remarque}
  On three strands and more, Milnor invariant $\mu^w_{i_1i_2i_3}$,
defined in \cite[Sec 5.2]{vaskho} ---but which are also described,
in terms of Gauss diagram formula,  as
$\left\langle\dessin{.7cm}{GSum_1}+\dessin{.7cm}{GSum_2}-\dessin{.7cm}{GSum_3},
  - \right\rangle$, using notation from \cite[Sec 3.2]{vaskho2}--- detects any 
$\Delta$ move, but is invariant under $\SC$.

Note also that, even on two strands, $\wSL_2^\F$ is a proper
quotient of $\wSL_2^\Delta$ since the elements
\[
S: \dessin{2.5cm}{S_1} \hspace{.2cm}\leftrightsquigarrow\hspace{.2cm}
\dessin{1.8cm}{GS_1}
\hspace{2cm}\textrm{and}\hspace{2cm} S': 
\dessin{2.5cm}{S_2} \hspace{.2cm}\leftrightsquigarrow\hspace{.2cm}
\dessin{1.8cm}{GS_2}.
\]
are equal in the former but not in the latter.
Indeed, the invariant $Q_2$, defined in the proof of \cite[Lemma 4.10]{vaskho2} as the invariant
for welded string links up to $\SC$ given by $\left\langle\dessin{.7cm}{QSum_1}-\dessin{.7cm}{QSum_2}+\dessin{.7cm}{QSum_3}-\dessin{.7cm}{QSum_4}+\dessin{.7cm}{QSum_5}-\dessin{.7cm}{QSum_6},
  - \right\rangle$
is also invariant under $\Delta$: 
 if a $\Delta$ move involves three distinct
strands, only one of the three arrows affected by this move can be involved in the
computation of $Q_2$, so that the value of $Q_2$ is the same before and after the
move; if a $\Delta$ move involves only one or two distinct strands,
then it was noticed in the proof of Proposition \ref{prop:SC=>Del} that
it can be replaced by two $\SC$ and one $\RT$ moves. 
The invariant $Q_2$ is hence well defined on $\wSL_2^\Delta$, and it is directly computed that $Q_2(S)=-1$ whereas $Q_2(S')=0$.
\end{remarque}


\section{Classifying invariants}
\label{sec:Classification}
 

In this section, we classify string link diagrams
modulo the main local moves studied above
and as a corollary, we discuss how some classical
local moves can be extended to the welded case.
A global description of the results is given in Figure \ref{fig:BigDiag}.

\begin{figure}
  \[
\xymatrix@!0@C=3cm@R=.7cm{
\AutC^0(\RFn) &\Z^{\frac{n(n-1)}{2}}& & \Z_2^{n-1} && 0\\
&&&&&\\
&&&&&\\
\SLn^\SC \ar@{->>}[r] \ar[uuu]_{\varphi_\HL}^-\vcong \ar@{^(->}[ddddd]&\SLn^\Delta
\ar@{->>}[rr] \ar[uuu]^-\vcong_-{\big(\lk_{ij}\big)_{1\leq i<j\leq n}}
\ar@{^(->}[dddr] \ar@{^(->}[ddddd]&& \SLn^\Sharp
\ar@{->>}[rr]\ar[uuu]^-\vcong_-{\big(\lk^{\m}_{i\ast}\big)_{1\leq i\leq n-1}}\ar@{^(->}[ddddd]&&
\SLn^\CC 
\ar[uuu]^-\vcong \ar@{^(->}[ddddd]\ar@{_(->}[ldddd]\\
&&&&&\\
&&&&&\\
&&\wSLn^\AR
\ar[dddddd]_(.666)\vcong^(.666){\big(\vlk_{ij}+\vlk_{ji}\big)_{1\leq
  i<j\leq n}}|(.212)\hole |(.333)\hole&&&\\
&&&&\wSLn^\CC
\ar@{->>}[rd]\ar[ddddd]_(.6)\vcong^(.6){\big(\vlk_{ij}-\vlk_{ji}\big)_{1\leq
  i<j\leq n}}|!{[dl];[dr]}\hole&\\
\wSLn^\SV
\ar@{->>}[r]\ar[dddd]^-{\varphi^w_\HL}_-\vcong&\wSLn^\F
\ar@/^.2cm/@{->>}[ruu]\ar@{->>}[rr]\ar[dddd]_-\vcong^{\big(\vlk_{ij}\big)_{1\leq
  i\neq j\leq n}}\ar@/^.3cm/@{->>}[rrru]|(.67)\hole&&\wSLn^{\wSharp}
  \ar@{->>}[rr]
\ar[dddd]_-\vcong^{\substack{\big(\vlk^{\m}_{ij}+\vlk^{\m}_{ji}\big)_{1\leq i<j\leq n}\\\oplus\\\big(\vlk^{\m}_{i\ast}\big)_{1\leq i\leq n-1}}}&&\wSLn^{\V}
\ar[dddd]_-\vcong\\
&&&&&\\
&&&&&\\
&&&&&\\
\AutC(\RFn)&\Z^{n(n-1)}&\Z^{\frac{n(n-1)}{2}} &\Z_2^{\frac{(n+2)(n-1)}{2}}&\Z^{\frac{n(n-1)}{2}}&0
}
\]
  \caption{Summary of the classification and extension results}
  \label{fig:BigDiag}
\end{figure}
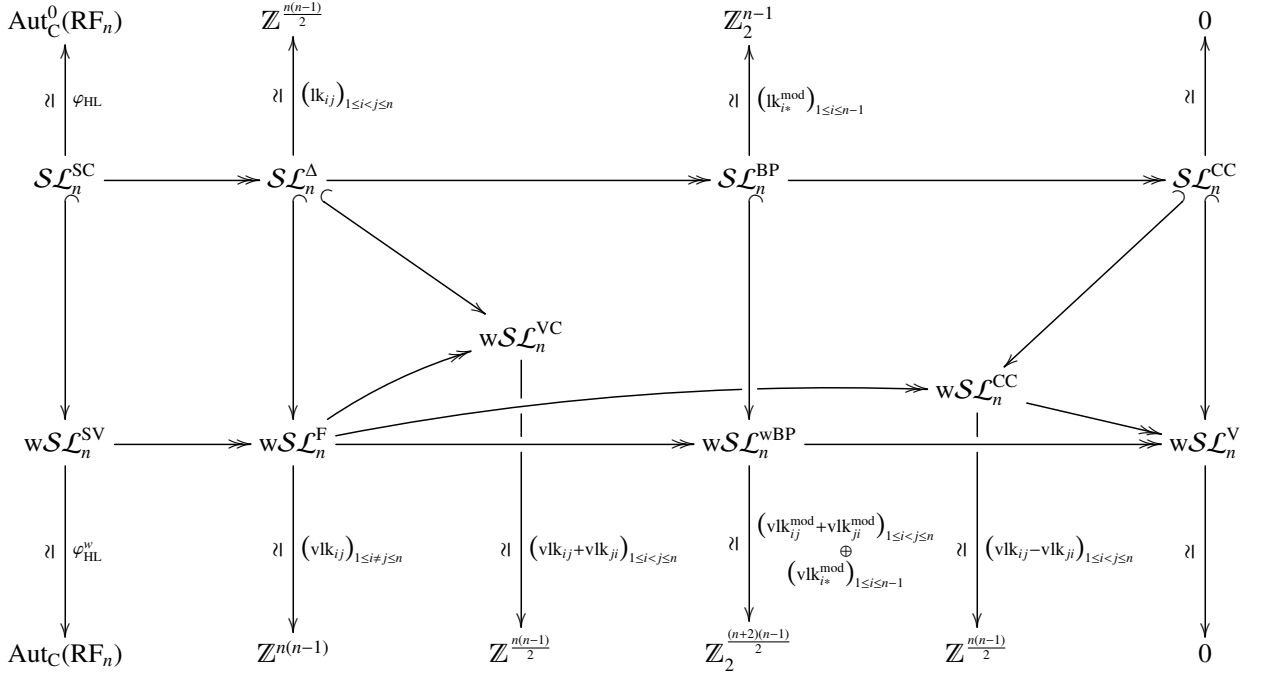

\subsection{Classifications}

\begin{defi}
  Let $\mu$ be a local move and $\func{\phi}{\vSLDn}{A}$, for some monoid $A$, be a morphism of monoids.\\
We say that $\phi$ \emph{w--classifies} $\mu$ if $\phi$ is invariant under $\mu$ and the welded Reidemeister moves, and that the induced map $\func{\phi_*}{\wSLn^\mu}{A}$ is an isomorphism.\\
If $\mu$ is classical, we say that $\phi$ \emph{c--classifies} $\mu$ if $\phi$ is invariant under $\mu$ and the classical Reidemeister moves, and that the induced map $\func{\phi_*}{\SLn^\mu}{A}$ is an isomorphism.
\end{defi}

Now, we define a few welded invariants which shall classify the moves introduced in the previous section.

\begin{defi}
  For every $i\neq j\in\{1,\ldots,n\}$, we define the \emph{virtual linking number} $\func{\vlk_{ij}}{\vSLD}{\Z}$ as the map which counts, with signs, the crossings where the $i^\textrm{th}$ component passes over the $j^\textrm{th}$ component. 
From the Gauss diagram point of view, it simply counts the signs of all the arrows going from the $i^\textrm{th}$ to the $j^\textrm{th}$ strands.
\end{defi}

The following lemma is considered as folklore. The first part is clear from the Gauss diagram point of view  and the second part can be proved using the virtual diagram point of view.

\begin{lemme}
  For every $i\neq j\in\{1,\ldots,n\}$, $\vlk_{ij}$ is invariant under
  welded Reidemeister moves and, if $D\in\SLDn$ is a classical
  diagram, then $\vlk_{ij}(D)=\vlk_{ji}(D)$.
\end{lemme}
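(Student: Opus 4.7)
The plan is to handle the two claims separately, using the Gauss diagram viewpoint for the invariance statement and the virtual diagram viewpoint for the symmetry.

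\emph{Invariance under welded Reidemeister moves.} Each arrow in a Gauss diagram carries three pieces of intrinsic data, independent of the positions of its endpoints along the strands: a tail strand, a head strand, and a sign in $\{\pm 1\}$. Since $\vlk_{ij}$ only depends on the multiset of triples (tail strand, head strand, sign) over all arrows, it suffices to check that each welded Reidemeister move preserves this multiset, up to cancellation of pairs with the same direction and opposite signs. The virtual and mixed Reidemeister moves act trivially on the Gauss diagram. Move R1 inserts or removes a self-arrow, which cannot contribute to $\vlk_{ij}$ because $i\neq j$. Move R2 inserts or removes a pair of arrows sharing their tail strand, head strand and direction but carrying opposite signs, so its net contribution vanishes. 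Moves R3 and OC only reorder endpoints along the strands and do not alter any triple.

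\emph{Symmetry in the classical case.} Here I adopt the virtual diagram point of view. For a classical diagram $D$, project strands $i$ and $j$ onto $I\times I$ and, at any transverse intersection point $c$ of the two projected arcs, define the \emph{planar sign} $\sigma_c\in\{\pm 1\}$ as the sign of $\det(T_i,T_j)$, where $T_i,T_j$ are the tangent vectors of the two projections at $c$. Unlike the knot-theoretic crossing sign, $\sigma_c$ depends only on the orientations and not on the over/under information. A direct unwinding of the sign conventions shows that the knot-theoretic crossing sign $\varepsilon_c$ agrees with $\sigma_c$ when strand $i$ passes over strand $j$ at $c$, and with $-\sigma_c$ in the opposite case. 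Splitting the two sums defining $\vlk_{ij}(D)$ and $\vlk_{ji}(D)$ accordingly then gives
\[
\vlk_{ij}(D)-\vlk_{ji}(D) = \sum_{c}\sigma_c,
\]
where the sum runs over all classical crossings between strands $i$ and $j$.

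To conclude, I observe that $\sum_c\sigma_c$ is the algebraic planar intersection number of the two immersed arcs projecting strands $i$ and $j$, with endpoints $\{(p_i,0),(p_i,1)\}$ and $\{(p_j,0),(p_j,1)\}$ respectively. This number is invariant under homotopies of arcs rel boundary, and both arcs are homotopic rel boundary to disjoint vertical segments, so the sum vanishes and $\vlk_{ij}(D)=\vlk_{ji}(D)$. The one delicate point is pinning down the sign relation between $\varepsilon_c$ and $\sigma_c$ correctly; once that identity is established, both assertions follow from essentially formal considerations.
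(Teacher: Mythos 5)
Your proof is correct and follows exactly the route the paper indicates: it declares the lemma folklore, noting that the first part is clear from the Gauss diagram viewpoint and the second can be proved from the virtual diagram viewpoint, and you have simply supplied the details along those same two lines. Both your case check of the welded Reidemeister moves on arrow data and your reduction of $\vlk_{ij}-\vlk_{ji}$ to the (vanishing) algebraic planar intersection number of the two projected arcs are sound.
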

\begin{nota}
  For every $i\neq j\in\{1,\ldots,n\}$, we set
  \begin{itemize}
  \item the \emph{linking number} $\func{\lk_{ij}}{\SLDn}{\Z}$ as the restriction to classical diagrams of either (and
    equivalently) $\vlk_{ij}$,
    $\vlk_{ji}$ or $\frac{1}{2}\big(\vlk_{ij}+\vlk_{ji}\big)$;
  \item  $\vlk_{i*}:=\psum_{\substack{1\leq k\leq n\\k\neq i}}\vlk_{ik}$
    and $\lk_{i*}:=\psum_{\substack{1\leq
        k\leq n\\k\neq i}}\lk_{ik}$;
  \item $\vlk_{ij}^\m$, $\vlk_{i*}^\m$, $\lk_{ij}^\m$ and
    $\lk_{i*}^\m$ as the modulo 2 reduction of $\vlk_{ij}$, $\vlk_{i*}$, $\lk_{ij}$ and $\lk_{i*}$, respectively.
  \end{itemize}
\end{nota}
Linking and virtual linking numbers can be similarly defined for classical or welded pure braids and links.

\medskip

In \cite{HL}, Habegger and Lin defined a map
$\func{\varphi_\HL}{\SLDn}{\AutC^0(\RFn)}$ which was extended into a map
$\func{\varphi_\HL^w}{\vSLDn}{\AutC(\RFn)}$ in \cite{vaskho}, in
the sense that $\varphi_w\circ\iota=\varphi$.
Here, $\RFn$ denotes the largest quotient of the free group over $x_1,\ldots,x_n$ such that each $x_i$ commutes with all its conjugates, $\AutC(\RFn)$ is the group of automorphisms of $\RFn$ mapping each $x_i$ to a conjugate of itself, and $\AutC^0(\RFn)$ is the subgroup of such  automorphisms fixing $x_1\cdots x_n$. 
\medskip 

The following classification results are known.
\begin{prop}\label{prop:whitney'strique}$\ $\\
In the classical case: 
 \begin{itemize}
  \item {\cite[Thm 1.1]{MN}\footnote{in the given references, the statements are for links rather than string links, but as discussed in Section \ref{sec:Links}, up to $\Delta$ or $\Sharp$, these notions are the same\label{footnote1}}}\, 
  The map $\func{\big(\lk_{ij}\big)_{1\leq i<j\leq
      n}}{\SLDn}{\Z^{\frac{n(n-1)}{2}}}$ c--classifies $\Delta$. 
  \item {\cite[Thm 11.6.7]{Kawauchi} and \cite[Thm A.2]{MN}\refmark{footnote1}}\,
  The map $\func{\big(\lk_{i*}^\m\big)_{1\leq i\leq
      n-1}}{\SLDn}{\Z_2^{n-1}}$ c--classifies $\Sharp$. 
  \item {\cite[Thm 1.7]{HL}}\,
  The map $\func{\varphi_\HL}{\SLDn}{\AutC^0(\RFn)}$ c--classifies $\SC$.
\end{itemize}
In the welded case: 
 \begin{itemize}
  \item {\cite[Thm 2.34]{vaskho}}\, The map $\func{\varphi_\HL^w}{\vSLDn}{\AutC(\RFn)}$ w--classifies $\SV$.
\end{itemize}
\end{prop}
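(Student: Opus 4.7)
The plan is to observe that Proposition \ref{prop:whitney'strique} is a compilation of four classification results, each attributed to an explicit reference, so the proof reduces to citing the corresponding sources. The underlying strategy is however uniform across all four parts: for each local move $\mu$ and candidate invariant $\phi$, one must check (1) that $\phi$ is well defined, i.e.\ unchanged by the relevant Reidemeister moves and by $\mu$, and (2) that the induced map on the quotient is a bijection. Invariance is in each case a routine Gauss-diagram computation, done one move at a time: a $\Delta$-move rearranges three arrows but preserves the strand-pair on which each lives, so all $\lk_{ij}$'s are preserved; a $\Sharp$-move alters $\lk_{i*}$ by an even integer (the four new crossings contribute in pairs of opposite signs on the two pairs of strands involved); and $\SC$, resp.\ $\SV$, preserves $\varphi_\HL$, resp.\ $\varphi_\HL^w$, essentially by construction of these peripheral invariants. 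Surjectivity is obtained by an explicit construction in each case: any target value is realized by a stacking of elementary (welded) tangles adding the prescribed linking data or realizing the prescribed automorphism of $\RFn$.

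The real content of the proposition is injectivity, which is a substantial theorem in each case. For the first part, \cite[Thm 1.1]{MN}, one argues that a classical string link with vanishing linking numbers can be reduced to the trivial element by $\Delta$-moves, via a normal form that iteratively cancels triples of crossings. The second part rests on the same philosophy, combined with the observation that $\Sharp$ can trade two crossings of opposite signs on the same pair of strands, which suffices modulo $2$; this is the combined content of \cite[Thm 11.6.7]{Kawauchi} and \cite[Thm A.2]{MN}. The third and fourth parts are the Habegger--Lin theorem \cite[Thm 1.7]{HL} and its welded extension \cite[Thm 2.34]{vaskho}; their injectivity arguments proceed by analysing preferred longitudes in $\RFn$, using induction on the number of strands and Stallings-type results on the lower central series of the free group, to extract from the equality of the peripheral automorphisms an explicit sequence of $\SC$ (resp.\ $\SV$) moves equating the two representatives.

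The main obstacle is precisely this injectivity step in each of the four parts; reproducing the full arguments would require substantial machinery (clasper calculus or triangular moves for the $\Delta$/$\Sharp$ cases, and reduced peripheral systems with Gauss-diagram surgery for the $\SC$/$\SV$ cases). Since each statement is explicitly referenced to the literature, the proof in the paper essentially consists of collecting these citations, while the invariance and surjectivity parts can be safely left to the reader as standard checks.
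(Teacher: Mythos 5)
Your proposal matches the paper exactly: the proposition is stated with no proof, as a compilation of results cited to \cite{MN}, \cite{Kawauchi}, \cite{HL} and \cite{vaskho}, so the ``proof'' is indeed just the collection of references, and your sketch of what each cited argument contains is accurate. The only point you do not mention is the one the paper relegates to a footnote, namely that the first two references are stated for links rather than string links, and that passing between the two up to $\Delta$ or $\Sharp$ requires the closure-map bijection established later in Section \ref{sec:Links}.
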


We now provide new classification results. To this end, we use the Gauss diagram point of view and define, for every $k\in\N$, $\e\in\{\pm1\}$ and $i\neq j\in\{1,\ldots,n\}$, $G^{\e k}_{i,j}$ to be the Gauss diagram which has only $k$ horizontal $\e$--labeled arrows
from strand $i$ to $j$.

\begin{prop}\label{prop:ClassificationF}
  The map $\func{\big(\vlk_{ij}\big)_{1\neq i<j\leq
      n}}{\vSLDn}{\Z^{n(n-1)}}$ w--classifies $\F$.
\end{prop}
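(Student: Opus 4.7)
The plan is to verify that the map $\phi:=\big(\vlk_{ij}\big)_{1\leq i\neq j\leq n}$ descends to $\wSLn^\F$ and that the induced map is a bijection, then check it is a monoid morphism.

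First I would check invariance: the folklore lemma stated above already gives invariance of each $\vlk_{ij}$ under the welded Reidemeister moves, so only the $\F$ move must be addressed. From the Gauss diagram point of view, $\F$ commutes an adjacent tail and head without changing the sign, the source strand or the target strand of any arrow. Since $\vlk_{ij}$ only records the algebraic count of arrows going from strand $i$ to strand $j$, it is manifestly unaffected by $\F$. Additivity under the stacking product, and hence the fact that $\phi_*$ is a monoid morphism, follows from the same description.

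Next I would prove surjectivity by exhibiting a concrete preimage. For any $(a_{ij})\in\Z^{n(n-1)}$ with $i\neq j$, the stacking product
\[
\prod_{i\neq j} G^{\mathrm{sgn}(a_{ij})|a_{ij}|}_{i,j}
\]
(taken in any fixed order) has $\vlk_{ij}=a_{ij}$ by construction, so $\phi_*$ is surjective.

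The heart of the argument is injectivity, and this is where Corollary \ref{cor:Commutation} does the work. Given a Gauss diagram $D$ representing an element of $\wSLn^\F$, the corollary says that the only invariant information of $D$ modulo $\F$ is, for each arrow, its sign together with the ordered pair of source/target strands. I would therefore push all endpoints freely along the strands to put $D$ into the following normal form: for each ordered pair $(i,j)$ with $i\neq j$, group together all arrows going from strand $i$ to strand $j$ in a single horizontal block, with the blocks stacked in a fixed reference order along the strands. Within each such block, each pair of arrows with opposite signs can be made adjacent (again by $\F$) and cancelled by an $\RD$ move. What remains in the $(i,j)$-block is exactly $|\vlk_{ij}(D)|$ parallel horizontal arrows all of sign $\mathrm{sgn}\big(\vlk_{ij}(D)\big)$. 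Hence the normal form of $D$ depends only on $\phi(D)$, which gives injectivity of $\phi_*$.

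The only step requiring a little care is the cancellation of an opposite-sign pair inside a block: one must argue that after bringing the two endpoints of one sign next to those of the other sign via $\F$, one indeed produces an $\RD$ configuration (two arrows with head/tail adjacent on both strands and opposite signs). This is automatic once all endpoints in a block have been aligned in the same cyclic pattern, which $\F$ allows by Corollary \ref{cor:Commutation}; I do not expect any genuine obstacle, so this is the main (and essentially only) technical point to spell out.
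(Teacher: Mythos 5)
Your overall strategy is the same as the paper's: invariance and the monoid property from the Gauss diagram description of $\F$, surjectivity by stacking the diagrams $G^{\e k}_{i,j}$, and injectivity by driving an arbitrary Gauss diagram to a normal form using Corollary \ref{cor:Commutation} (plus $\RD$ cancellations of opposite-sign pairs, which the paper leaves implicit and you rightly spell out). However, there is one concrete omission in your injectivity argument: you never dispose of \emph{self-arrows}. Your normal form only collects, for each ordered pair $(i,j)$ with $i\neq j$, the arrows from strand $i$ to strand $j$; but a general Gauss diagram also contains arrows with both endpoints on the same strand, which contribute to no $\vlk_{ij}$ whatsoever. As written, two diagrams with identical virtual linking numbers could still differ by a collection of self-arrows, so your normal form does not yet depend only on $\phi(D)$ and injectivity does not follow. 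Indeed, by your own reading of Corollary \ref{cor:Commutation}, a self-arrow would retain its sign and the pair $(i,i)$ as ``relevant information'' invisible to $\phi$.

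The fix is short and is precisely the first step of the paper's injectivity argument: using Corollary \ref{cor:Commutation}, bring the tail of each self-arrow adjacent to its own head and erase the arrow by $\RU$ (this is the observation, used in the proof of Theorem \ref{th:LocMovesOrder}, that $\F \stackrel{w}{\Rightarrow} \SV$). Once all self-arrows are gone, your block-by-block reorganization and $\RD$ cancellation yields the intended normal form determined by $\big(\vlk_{ij}(D)\big)_{i\neq j}$, and the rest of your argument goes through.
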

\begin{proof}
It essentially follows from Corollary \ref{cor:Commutation} that virtual linking numbers are invariant under $\F$.

It can be noted that $G^{\e k}_{i,j}$ satisfies $\vlk_{i,j}(G^{\e
  k}_{i,j})=\e k$ and $\vlk_{p,q}(G^{\e
  k}_{i,j})=0$ for $(p,q)\neq(i,j)$.
By stacking such Gauss
diagrams in lexicographical order of $i\neq j\in\{1,\ldots,n\}$, we
obtain normal forms realizing any configuration of the virtual linking
numbers.

Now, given a Gauss diagram, all self-arrow can be removed using $\F$ and welded Reidemeister moves since $\F \stackrel{w}{\Rightarrow} \SC$, and then, using Corollary \ref{cor:Commutation}, arrow ends can be reorganized in order to obtain one of
the above normal forms.
\end{proof}

\begin{prop}\label{prop:ClassificationAR}
  The map $\func{\big(\vlk_{ij}+\vlk_{ji}\big)_{1\leq i<j\leq
      n}}{\vSLDn}{\Z^{\frac{n(n-1)}{2}}}$ w--classifies $\AR$.
\end{prop}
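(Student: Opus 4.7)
The plan is to mirror the proof of Proposition \ref{prop:ClassificationF}, exploiting the fact that $\AR$ w--generates $\F$ by Proposition \ref{prop:ARCC=>F}.

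First, I would verify invariance. From the Gauss diagram point of view, the move $\AR$ reverses the orientation of a single arrow while preserving its sign, so an $\e$-labeled arrow from strand $i$ to strand $j$ is replaced by an $\e$-labeled arrow from $j$ to $i$. Hence $\vlk_{ij}$ decreases by $\e$ while $\vlk_{ji}$ increases by $\e$, so the sum $\vlk_{ij}+\vlk_{ji}$ is preserved. Combined with the already known invariance of each $\vlk_{pq}$ under the welded Reidemeister moves, this shows that the map descends to a well-defined monoid homomorphism $\wSLn^\AR \to \Z^{n(n-1)/2}$.

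For surjectivity, I would use the diagrams $G^{\e k}_{i,j}$ introduced before Proposition \ref{prop:ClassificationF}: for $1\leq i<j\leq n$, the diagram $G^{\e k}_{i,j}$ sends the pair $(i,j)$ to $\e k$ under $\vlk_{ij}+\vlk_{ji}$ and all other pairs to $0$. Stacking such diagrams in lexicographical order of pairs $i<j$ realizes any prescribed tuple in $\Z^{n(n-1)/2}$.

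The main technical step is injectivity. Given an arbitrary Gauss diagram $D$, Proposition \ref{prop:ARCC=>F} allows us to use $\F$ moves freely, so the argument in Proposition \ref{prop:ClassificationF} reduces $D$ to a stacking of blocks $G^{\e_{pq},|k_{pq}|}_{p,q}$ over all ordered pairs $p\neq q$, realizing $(\vlk_{pq}(D))_{p\neq q}$. For each unordered pair $\{i,j\}$ with $i<j$, applying $\AR$ to every arrow of the block $G^{\e_{ji},|k_{ji}|}_{j,i}$ converts its arrows into arrows from $i$ to $j$ with the same signs; using Corollary \ref{cor:Commutation} to bring them adjacent to the $G^{\e_{ij},|k_{ij}|}_{i,j}$ block, pairs of opposite signs cancel by $\RD$, leaving a single block $G^{\e,|k|}_{i,j}$ with $\e|k|=\vlk_{ij}(D)+\vlk_{ji}(D)$. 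Performing this consolidation for every pair produces the normal form determined by $\big(\vlk_{ij}+\vlk_{ji}\big)_{1\leq i<j\leq n}$, so any two diagrams with identical tuple are $\AR$-equivalent, establishing injectivity.
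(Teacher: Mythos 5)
Your proof is correct and follows essentially the same route as the paper's: invariance and surjectivity are argued identically, and the injectivity argument uses the same two ingredients (the $\F$-normal form of Proposition \ref{prop:ClassificationF}, available since $\AR \stackrel{w}{\Rightarrow} \F$, together with $\AR$ to reverse arrows between a given pair of strands). The only difference is the order of operations — the paper first kills every $\vlk_{ij}$ with $i>j$ by creating cancelling arrow pairs with $\RD$ and applying $\AR$, then invokes the $\F$-classification, whereas you normalize via $\F$ first and then merge the reversed blocks — which is immaterial.
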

\begin{proof}
  Performing $\AR$ on a self-arrow does not affect any $\vlk_{ij}+\vlk_{ji}$. Performing it on an arrow between strands $i$ and $j$ adds $\pm1$ to $\vlk_{ij}$ while it adds $\mp1$ to $\vlk_{ji}$; the sum $\vlk_{ij}+\vlk_{ji}$ hence remains invariant.

  Surjectivity of the induced map is achieved by considering the same normal forms than in the proof of Proposition \ref{prop:ClassificationF}, but restricted to 
$G^{\e k}_{i,j}$ with $i<j$.
  
  Given a Gauss diagram, all $\vlk_{ij}$ with $i>j$ can be made to vanish by creating sufficiently many arrows from the $i^\textrm{th}$ to the $j^\textrm{th}$ strand, using $\RD$ moves, and performing $\AR$ on them. Then, by Proposition \ref{prop:ClassificationF}, 
  there is a sequence of $\F$ and welded Reidemeister moves to one of the above normal forms, and this concludes the proof since $\AR \stackrel{w}{\Rightarrow} \F$.
\end{proof}

\begin{prop}\label{prop:ClassificationCC}
  The map $\func{\big(\vlk_{ij}-\vlk_{ji}\big)_{1\leq i<j\leq
      n}}{\vSLDn}{\Z^{\frac{n(n-1)}{2}}}$ w--classifies $\CC$.
\end{prop}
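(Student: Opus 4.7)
The plan is to adapt the strategy used for Propositions~\ref{prop:ClassificationF} and~\ref{prop:ClassificationAR}, exploiting the fact that from the Gauss diagram point of view a $\CC$ move replaces an arrow by one with opposite direction and opposite sign. First, I would verify invariance: a $\CC$ performed on a self-arrow leaves every $\vlk_{ij}$ with $i\neq j$ unchanged; a $\CC$ performed on an arrow from strand $i$ to strand $j$ with sign $\e$ decreases $\vlk_{ij}$ by $\e$ (the old arrow disappears) and simultaneously decreases $\vlk_{ji}$ by $\e$ (the new arrow, from $j$ to $i$ with sign $-\e$, contributes $-\e$ to $\vlk_{ji}$), so the difference $\vlk_{ij}-\vlk_{ji}$ is preserved. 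Welded Reidemeister invariance is already known.

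For surjectivity of the induced map on $\wSLn^\CC$, I would stack the diagrams $G^{\e k}_{i,j}$ with $i<j$ in lexicographical order. Since $\vlk_{ij}(G^{\e k}_{i,j})=\e k$ and $\vlk_{ji}(G^{\e k}_{i,j})=0$, any tuple in $\Z^{\frac{n(n-1)}{2}}$ is realized.

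Injectivity is the main content. By Proposition~\ref{prop:ARCC=>F}, $\CC$ w--generates $\F$, so modulo $\CC$ all $\F$ moves are available. I would therefore begin by reducing any given Gauss diagram to the normal form supplied by Proposition~\ref{prop:ClassificationF}, namely a lexicographic stacking of blocks $G^{\vlk_{ij}}_{i,j}$ indexed by the ordered pairs $i\neq j$. Next, for each pair $i<j$, I would apply $\CC$ to every arrow from strand $j$ to strand $i$, turning it into an arrow from $i$ to $j$ with the opposite sign. After this step, for each pair $i<j$ all arrows between strands $i$ and $j$ have their tail on the lower-indexed strand, and their signed total equals $\vlk_{ij}-\vlk_{ji}$. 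Finally, using Corollary~\ref{cor:Commutation} to bring opposite-signed parallel arrows into adjacent position, $\RD$ moves cancel them pairwise, yielding a normal form consisting of $|\vlk_{ij}-\vlk_{ji}|$ arrows from $i$ to $j$ of constant sign, stacked lexicographically over $i<j$. This normal form depends only on the invariants, giving injectivity.

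The main obstacle I anticipate is essentially bookkeeping: ensuring that after the $\CC$ moves the remaining arrows can be commuted into position for $\RD$ cancellation. This is resolved by the commutation freedom provided by Corollary~\ref{cor:Commutation}, which is legitimate here precisely because $\CC\stackrel{w}{\Rightarrow}\F$. Beyond that, the statement is essentially a formal consequence of Propositions~\ref{prop:ClassificationF} and~\ref{prop:ARCC=>F}.
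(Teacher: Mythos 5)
Your proposal is correct and follows essentially the same route as the paper, whose proof of this proposition simply states that it is ``totally similar'' to that of Proposition~\ref{prop:ClassificationAR}: invariance by tracking how a $\CC$ move sends an arrow from $i$ to $j$ with sign $\e$ to one from $j$ to $i$ with sign $-\e$, surjectivity via the stacked normal forms $G^{\e k}_{i,j}$ with $i<j$, and injectivity by using $\CC$ to kill the $\vlk_{ji}$ with $i<j$ and then invoking Proposition~\ref{prop:ClassificationF} together with $\CC\stackrel{w}{\Rightarrow}\F$. The only (immaterial) difference is the order of operations in the injectivity step --- you pass to the $\F$-normal form first and then flip arrows, whereas the paper's template flips first and then normalizes.
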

\begin{proof}
  The proof is totally similar to that of Proposition \ref{prop:ClassificationAR}.
\end{proof}

\begin{prop}
    The map $\func{\big(\vlk^\m_{ij}+\vlk^\m_{ji}\big)_{1\leq i<j\leq
      n}\oplus \big(\vlk^\m_{i*}\big)_{1\leq i\leq
      n-1}}{\vSLDn}{\Z_2^{\frac{n(n-1)}{2}}\oplus\Z_2^{n-1}=\Z_2^{\frac{(n+2)(n-1)}{2}}}$ w--classifies $\wSharp$.
\end{prop}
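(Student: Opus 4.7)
The plan follows the three-step pattern of Propositions~\ref{prop:ClassificationF}, \ref{prop:ClassificationAR}, and \ref{prop:ClassificationCC}: invariance, surjectivity, injectivity. Invariance under welded Reidemeister moves is automatic since each $\vlk_{ij}$ is itself a welded invariant. For invariance under $\wSharp$, the Gauss-diagram form of the move affects four arrows supported on at most four strand portions; a case analysis according to how those endpoints distribute across strands shows that the resulting changes to $\vlk_{ij}+\vlk_{ji}$ (for $i<j$) and to $\vlk_{i\ast}$ come in pairs of equal sign, and hence vanish modulo~$2$. For surjectivity, the $\F$-normal forms built in the proof of Proposition~\ref{prop:ClassificationF}---stackings of the elementary Gauss diagrams $G^{\e k}_{i,j}$---already realize every $(m_{ij})_{i\neq j}\in\Z^{n(n-1)}$ as virtual linking numbers, and the linear reduction $\Z^{n(n-1)}\to\Z_2^{(n+2)(n-1)/2}$ induced by our invariants is visibly onto.

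For injectivity, I would use that $\wSharp \stackrel{w}{\Rightarrow} \F$ (Theorem~\ref{th:LocMovesOrder}), which yields a surjection $\wSLn^{\F}=\Z^{n(n-1)}\twoheadrightarrow\wSLn^{\wSharp}$ factoring our classifying map. The statement then reduces to the claim that the kernel
\[
K:=\bigl\{(m_{ij})_{i\neq j}\in\Z^{n(n-1)}\,\bigm|\,m_{ij}+m_{ji}\text{ even},\ \psum_{k\neq i}m_{ik}\text{ even for all }i\bigr\}
\]
is entirely $\wSharp$-trivial in $\wSLn^{\F}$. A short argument in linear algebra over $\Z_2$ shows that $K$ is generated, as a subgroup of $\Z^{n(n-1)}$, by the doublings $2e_{ij}$ (for $i\neq j$) together with the \emph{triangle} elements: for each unordered triple $\{i,j,k\}\subset\{1,\ldots,n\}$, the tuple carrying a $1$ in each of the six positions $m_{pq}$ with $\{p,q\}\subset\{i,j,k\}$ and a $0$ elsewhere.

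The main obstacle is then the explicit realization of these two families of generators as $\wSharp$-trivial Gauss diagrams. The doublings $2e_{ij}$ should yield to a short sequence exploiting the $\Clasp$-substructure of $\wSharp$ displayed in the proof of Theorem~\ref{th:LocMovesOrder}, mimicking the classical argument that $\Sharp$ kills $2\lk_{ij}$. The triangle relations, by contrast, have no classical counterpart and form the truly delicate point of the proof: the natural strategy is to focus on the case of three strands and exhibit a direct Gauss-diagram manipulation trivializing the cyclic configuration with one positive arrow in each of the six cross-strand directions, the general $n$-strand case then following by restriction to triples of strands combined with the doubling relations.
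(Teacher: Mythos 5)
Your overall architecture is sound, and your injectivity strategy is genuinely different from the paper's: where the paper argues by induction on the number of strands (normalizing all $\vlk_{1i},\vlk_{i1}$ to zero and then forgetting the first strand), you reduce the problem to showing that an explicit generating set of the kernel subgroup $K\subset\Z^{n(n-1)}\cong\wSLn^{\F}$ is $\wSharp$-trivial. That reduction is correct: $\wSLn^{\wSharp}$ is indeed a quotient group of $\wSLn^{\F}$ since $\wSharp\stackrel{w}{\Rightarrow}\F$, and modulo $2\Z^{n(n-1)}$ your kernel $K$ is exactly the symmetric even-degree (cycle-space) condition on the complete graph, which is generated by $3$-cycles; so $K=\langle 2e_{ij},\ \text{triangles}\rangle$ as you claim. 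The doublings are also unproblematic: since $\wSharp$ w--generates $\SR$ (Proposition \ref{prop:wS=>SR}) and $\F$, one has $e_{ij}=-e_{ij}$ in $\wSLn^{\wSharp}$, hence $2e_{ij}=0$.

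The genuine gap is that you never realize the triangle generator as a $\wSharp$-trivial element --- you explicitly defer it as ``the truly delicate point'' and only describe a strategy. But this is precisely where all the geometric content of the proposition lives; everything else (invariance, surjectivity, the linear algebra over $\Z_2$) is routine. The paper's proof supplies exactly this missing ingredient: a local sequence of $\RD$ and $\OC$ moves followed by a single $\wSharp$, performed anywhere on the diagram, whose net effect on the virtual linking numbers supported on a triple of strands $\{1,i,j\}$ is the triangle class in $K/2\Z^{n(n-1)}$ (this is the ``Red'' sequence in the paper's injectivity argument, which simultaneously clears $\vlk_{1i},\vlk_{i1},\vlk_{1j},\vlk_{j1}$ at the cost of altering $\vlk_{ij},\vlk_{ji}$ by $\pm1$). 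Without exhibiting such a sequence, your proof establishes only that $\wSLn^{\wSharp}$ is a quotient of $\Z^{n(n-1)}/N$ for some subgroup $N$ with $2\Z^{n(n-1)}\subseteq N\subseteq K$, which is not enough to conclude injectivity. So the proposal is a valid and arguably cleaner organization of the argument, but it is incomplete until the triangle relation is actually constructed from $\wSharp$ and welded Reidemeister moves.
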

\begin{proof}
  A move $\wSharp$ is a combination of $\CC$ and $\AR$. As such, it modifies $\vlk_{ij}+\vlk_{ji}$ by a multiple of 2. Moreover, a move $\wSharp$ changes the number of non-self arrows with the tail on a given strand by 0 or 2: this is clear if none of the four involved pieces of strands are connected, and it can be case-by-case checked in the other situations. As a consequence, $\vlk_{i*}$ is also modified by a multiple of 2. In conclusion, the invariant is indeed invariant under $\wSharp$.

 Surjectivity of the induced map is achieved by stacking (in any order) elements of the form $G^{+1}_{i,j}$ as follows: fix an element in $\Z_2^{\frac{(n+2)(n-1)}{2}}$, seen as some $\big(\vlk^\m_{ij}+\vlk^\m_{ji}\big)_{1\leq i<j\leq n}\oplus \big(\vlk^\m_{i*}\big)_{1\leq i\leq n-1}$; start from the Gauss diagram with no arrow and, for each $i<j\in\{1,\ldots,n\}$, add one $G^{+1}_{i,j}$ whenever $\vlk^\m_{ij}+\vlk^\m_{ji}\equiv 1$; next, for each $i\in\{1,\ldots,n-1\}$, add $G^{+1}_{i,n}$ and $G^{+1}_{n,i}$ if $\vlk_{i*}(G)\neq \vlk_{i*}$ and do nothing otherwise. The resulting Gauss diagram is the requested preimage.

Injectivity of the induced map is proved by induction on $n$. For
$n=1$, the result follows from Corollary \ref{cor:Unknotting}. Now
assume that $n>1$ and that the result is true on $n-1$ strands. It was
shown in \cite[Thm 4.12]{vaskho} that every welded string link is related to a welded braid by welded Reidemeister and $\SV$ moves. Since every welded braid has an inverse, 
and since $\wSharp \stackrel{w}{\Rightarrow} \SV$, it follows that $\wSLn^{\wSharp}$ is a group and it is thus sufficient 
to prove that the kernel of the induced map is trivial. Consider hence a Gauss diagram $G$ which is in the kernel. In the following, we shall apply some welded Reidemeister and $\wSharp$ moves on $G$, but by abuse of notation we shall keep denoting it by $G$. 
First, we can modify $G$ so that each $\vlk_{ij}(G)$ is either 0 or 1. Indeed, this is easily achieved using the $\SR$ move, and $\wSharp$ w-generates $\SR$ by Proposition \ref{prop:wS=>SR}.  
Next, we show how to reduce all $\vlk_{1i}(G)$ and $\vlk_{i1}(G)$ to 0. 
If $\vlk_{1i}(G)$ is 1 for some $i\neq 1$, then there is $j\neq1,i$ such that   
$\vlk_{1j}(G)=1$, for otherwise $\vlk^\m_{1*}(G)$  would be 1; 
moreover, we have that $\vlk_{i1}(G)$ is also 1, for otherwise $\vlk_{1i}^\m(G)+\vlk_{i1}^\m(G)$ would be 1, 
and likewise we have $\vlk_{j1}(G)=1$.   
Then perform locally the following sequence anywhere on $G$ (there, the indices at the bottom correspond to the labels of the strands to which the different pieces belong):
\[ 
\dessin{3.2cm}{Red_1}\ \xrightarrow[\RD\textrm{'s}]{}\ \dessin{3.2cm}{Red_2}\ \xrightarrow[\OC\textrm{'s}]{}\ \dessin{3.2cm}{Red_3}\ \xrightarrow[\wSharp]{}\ \dessin{3.2cm}{Red_4}.
\]
As a result, we have that $\vlk_{1i}(G)=\vlk_{i1}(G)=\vlk_{1j}(G)=\vlk_{j1}(G)=0$. 
Repeat this operation until all $\vlk_{1i}(G)$ and $\vlk_{i1}(G)$ are 0, as desired. Using the normal form given in the proof of Proposition \ref{prop:ClassificationF}, there is a sequence of welded Reidemeister and $\F$ moves transforming $G$ into a Gauss diagram with no arrow touching the first strand. 
Since $\wSharp \stackrel{w}{\Rightarrow} \F$, this sequence can be traded for a sequence of welded Reidemeister and $\wSharp$ moves. By forgetting the first strand, we obtain a welded string link on $n-1$ strands which is in the kernel of the induced map. By induction hypothesis, $G$ is hence trivial in $\wSLn^{\wSharp}$.
\end{proof}

Classification results may be used to prove w--equivalence  
between local moves.
\begin{cor}\label{cor:SVF}
  On two strands, $\SV$ and $\F$ are w--equivalent, but for $n\geq3$, $\wSLn^\F$ is a proper quotient of $\wSLn^\SV$.
\end{cor}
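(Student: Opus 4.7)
The plan is to leverage Theorem~\ref{th:LocMovesOrder}, which gives $\F\stackrel{w}{\Rightarrow}\SV$ and therefore produces, for every $n\geq 1$, a canonical surjection $p\colon\wSLn^\SV\twoheadrightarrow\wSLn^\F$. The first assertion of the corollary is then the injectivity of $p$ for $n=2$, and the second is the failure of injectivity for $n\geq 3$.

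For $n=2$, I would start by noting that $\vlk_{12}$ and $\vlk_{21}$ count only mixed crossings and are therefore $\SV$--invariant, hence descend to a monoid morphism $\wSL_2^\SV\to\Z^2$. Via $p$, this factors through the isomorphism $\wSL_2^\F\xrightarrow{\sim}\Z^2$ of Proposition~\ref{prop:ClassificationF}, so injectivity of $p$ is equivalent to injectivity of this $\Z^2$--valued morphism. To establish the latter I would invoke Proposition~\ref{prop:whitney'strique}, which identifies $\wSL_2^\SV$ with $\AutC(\RF_2)$ via $\varphi_\HL^w$, and then analyze $\AutC(\RF_2)$: any element is determined by conjugating words $w_1,w_2\in\RF_2$ taken modulo the centralizers of $x_1$ and $x_2$ respectively, and on two generators these classes are entirely captured by their exponent sums, which are precisely $\vlk_{21}$ and $\vlk_{12}$. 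A more combinatorial variant would use $\SV$ to remove all self-arrows from a Gauss diagram representative and then reduce the remaining purely mixed configuration to a normal form as in the proof of Proposition~\ref{prop:ClassificationF}, using only welded Reidemeister moves and further $\SV$ manipulations.

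For $n\geq 3$, I would exhibit a welded string link that is $\F$--trivial but not $\SV$--trivial. By Proposition~\ref{prop:ClassificationF}, it suffices to produce a class with all pairwise $\vlk_{ij}=0$ which is nontrivial in $\wSLn^\SV$. My candidate is a welded analogue of the Borromean string link on three strands, completed by trivial strands if $n>3$: its virtual linking numbers obviously all vanish, while it is distinguished from the trivial element by the welded Milnor invariant $\mu^w_{123}$ recalled in the remark after Proposition~\ref{prop:SC=>Del}. Being a welded link-homotopy invariant, $\mu^w_{123}$ is in particular $\SV$--invariant, so a nonzero value on this element produces the required nontrivial class in the kernel of $p$.

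The hard part should be the $n=2$ injectivity, which requires either completing the algebraic identification $\AutC(\RF_2)\simeq\Z^2$ (verifying that the conjugating classes are detected by their exponent sums), or carrying out the delicate combinatorial reduction of a generic two-strand Gauss diagram to the normal form of Proposition~\ref{prop:ClassificationF} using only $\SV$ and welded Reidemeister moves. The $n\geq 3$ part, by contrast, reduces to invoking an already available Milnor-type invariant together with Proposition~\ref{prop:ClassificationF}.
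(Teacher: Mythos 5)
Your argument is correct, and for $n=2$ it is essentially the paper's: both identify $\wSL_2^\SV$ with $\AutC(\RF_2)$ via Proposition~\ref{prop:whitney'strique} and observe that every element of $\AutC(\RF_2)$ is of the form $x_1\mapsto x_2^{n_1}x_1x_2^{-n_1}$, $x_2\mapsto x_1^{n_2}x_2x_1^{-n_2}$ with composition adding exponents, so that $\AutC(\RF_2)\cong\Z^2$ is detected exactly by the exponent sums, i.e.\ by the virtual linking numbers; the surjection onto $\wSL_2^\F\cong\Z^2$ is then forced to be an isomorphism. Where you diverge is the case $n\geq 3$: the paper stays entirely algebraic and simply notes that $\AutC(\RF_n)$ is nonabelian for $n>2$ while $\Z^{n(n-1)}$ is abelian, so the surjection cannot be injective; you instead exhibit an explicit element of the kernel, namely a Borromean-type string link (padded with trivial strands) with all $\vlk_{ij}=0$ but $\mu^w_{123}\neq 0$. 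Both are valid. The paper's version is shorter and requires no further input beyond the two classification results already in hand; yours costs slightly more --- you need that $\mu^w_{123}$ descends to $\wSLn^\SV$ (the remark after Proposition~\ref{prop:SC=>Del} only states $\SC$--invariance explicitly, though $\SV$--invariance does follow from \cite[Sec 5.2]{vaskho} since these invariants factor through $\varphi^w_\HL$) and that it takes a nonzero value on the Borromean element --- but in exchange it produces a concrete nontrivial class in the kernel of $p$ rather than a pure existence statement, which is arguably more informative.
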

\begin{proof}
  The local move $\F$ is w-classified by virtual linking numbers, i.e. $\wSLn^\F$ is isomorphic to $\Z^{n(n-1)}$. 
On the other hand, $\wSLn^\SV$ is isomorphic to  $\AutC(RF_n)$. 
For $n=2$, it is easily seen that any element of $\Aut_C(\RF_2)$ can be written as $\varphi_{n_1,n_2}$, 
for some integers $n_1, n_2 \in \N$, where $\varphi_{n_1,n_2} (x_1)=x_2^{n_1} x_1 x_2^{-n_1}$
and $\varphi_{n_1,n_2}(x_2)= x_1^{n_2} x_2 x_1^{-n_2}$, and that $\varphi_{n_1,n_2} \varphi_{n_3,n_4}
=\varphi_{n_1+n_3,n_2+n_4}$. 
This implies shows that $\AutC(RF_2)$ is isomorphic $\Z^2$, while $\AutC(RF_n)$ is not abelian for $n>2$. 
\end{proof}
\begin{remarque}
  The fact that, on two strands, $\SV \stackrel{w}{\Rightarrow} \F$ can also be seen as a corollary of \cite[Prop 4.11]{vaskho}.
\end{remarque}

\subsection{Welded extensions}

\begin{defi}\label{def:Extension}
  Let $\M_c$ and $\M_w$ be two local moves such
  that $\M_c$ is classical and $\M_w \stackrel{w}{\Rightarrow} \M_c$. We say that $\M_w$ extends $\M_c$ if the map
  $\func{\iota_*}{\SLn^{\M_c}}{\wSLn^{\M_w}}$, induced by the
  inclusion $\plong{\iota}{\SLDn}{\vSLDn}$, is injective.
\end{defi}

This definition is motivated by the following direct consequence.

\begin{lemme}\label{lem:Extension}
  Let $\M_c$ and $\M_w$ be two local moves, such that $\M_c$ is
  classical and $\M_w$ extends $\M_c$. If two classical diagrams are
  connected by a sequence of $\M_w$ and welded Reidemeister moves,
  then they are connected by a sequence of $\M_c$ and classical
  Reidemeister moves. 
\end{lemme}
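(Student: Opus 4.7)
The plan is to unpack the definitions and reduce the statement to a direct application of the injectivity of $\iota_*$. Since the hypothesis ``$\M_w$ extends $\M_c$'' is defined precisely as the injectivity of the induced map $\func{\iota_*}{\SLn^{\M_c}}{\wSLn^{\M_w}}$, the lemma should follow almost tautologically, and the main task is really to verify that the natural diagram of quotients is consistent.

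First I would observe that, because $\M_w \stackrel{w}{\Rightarrow} \M_c$ (which is part of Definition \ref{def:Extension}) and because classical Reidemeister moves are a subset of welded Reidemeister moves, the inclusion $\plong{\iota}{\SLDn}{\vSLDn}$ descends to a well-defined map $\iota_*\colon \SLn^{\M_c}\to\wSLn^{\M_w}$; this is the map whose injectivity is assumed.

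Now, let $D_1,D_2\in\SLDn$ be two classical string link diagrams connected by a sequence of $\M_w$ and welded Reidemeister moves. By definition of $\wSLn^{\M_w}$, this exactly says that $\iota(D_1)$ and $\iota(D_2)$ represent the same element in $\wSLn^{\M_w}$, that is $\iota_*([D_1])=\iota_*([D_2])$, where $[D_i]$ denotes the class of $D_i$ in $\SLn^{\M_c}$. The injectivity of $\iota_*$ (\emph{i.e.} the extension hypothesis) then forces $[D_1]=[D_2]$ in $\SLn^{\M_c}$, which by definition of $\SLn^{\M_c}$ means that $D_1$ and $D_2$ are connected by a sequence of $\M_c$ and classical Reidemeister moves.

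There is essentially no obstacle: the lemma is a reformulation of the injectivity condition in terms of diagrams rather than equivalence classes. The only point requiring any care is verifying that $\iota_*$ is well defined, which follows immediately from the hypothesis $\M_w \stackrel{w}{\Rightarrow} \M_c$ combined with the fact that classical Reidemeister moves are welded Reidemeister moves.
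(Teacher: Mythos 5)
Your proof is correct and matches the paper's treatment: the paper states this lemma as a ``direct consequence'' of Definition \ref{def:Extension} without further argument, and your unpacking of the definitions (well-definedness of $\iota_*$ from $\M_w \stackrel{w}{\Rightarrow} \M_c$, then injectivity forcing $[D_1]=[D_2]$) is exactly the intended reasoning.
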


Now, we can use the classification results of the previous section to
obtain some extension results. In each case, it is sufficient to check that the target of the c--classifying map can be identified with a subset of the target of the w--classifying map and that, with this identification, the c--classifying map is actually the composition of the w--classifying map with the injection $\plong{\iota}{\SLDn}{\vSLDn}$.

\begin{prop}[{\cite[Thm 4.3]{vaskho2}}]
  The move $\SV$ extends $\SC$.
\end{prop}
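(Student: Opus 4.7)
The plan is to exploit the classification results already recalled in Proposition~\ref{prop:whitney'strique} and to promote the naturality statement $\varphi_\HL^w\circ\iota=\varphi_\HL$ (mentioned in the paragraph introducing $\varphi_\HL^w$) to the level of quotients. Indeed, since $\SV \stackrel{w}{\Rightarrow} \SC$ by Theorem~\ref{th:LocMovesOrder}, the inclusion $\plong{\iota}{\SLDn}{\vSLDn}$ descends to a well-defined map $\func{\iota_*}{\SLn^\SC}{\wSLn^\SV}$, and the identity $\varphi_\HL^w\circ\iota=\varphi_\HL$ yields a commutative square
\[
\xymatrix@R=0.8cm@C=1.5cm{
\SLn^\SC \ar[r]^-{\iota_*} \ar[d]_-{(\varphi_\HL)_*}^-\vcong & \wSLn^\SV \ar[d]^-{(\varphi_\HL^w)_*}_-\vcong \\
\AutC^0(\RFn) \ar@{^(->}[r] & \AutC(\RFn)
}
\]
in which the two vertical arrows are isomorphisms by Proposition~\ref{prop:whitney'strique}.

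The bottom horizontal arrow is, by definition, the inclusion of the subgroup of $\AutC(\RFn)$ fixing $x_1\cdots x_n$ into $\AutC(\RFn)$, and as such is injective. Chasing the diagram then immediately forces $\iota_*$ to be injective as well: if $L_1,L_2\in\SLn^\SC$ satisfy $\iota_*(L_1)=\iota_*(L_2)$, then $(\varphi_\HL)_*(L_1)=(\varphi_\HL)_*(L_2)$ by commutativity and injectivity of the inclusion, whence $L_1=L_2$ by injectivity of $(\varphi_\HL)_*$.

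The only point requiring any care is the verification that $\varphi_\HL^w\circ\iota=\varphi_\HL$ holds already at the diagrammatic level ---not merely up to the respective equivalence relations--- so that the induced square genuinely commutes; but this is exactly the sense in which $\varphi_\HL^w$ was defined as an extension of $\varphi_\HL$ in the discussion preceding Proposition~\ref{prop:whitney'strique}, and no further analysis of local moves is needed. In particular, there is no hard step in the argument: all the content is absorbed in the classification theorems of Habegger--Lin and of \cite{vaskho}, together with the obvious observation that $\AutC^0(\RFn)$ sits inside $\AutC(\RFn)$ as a subgroup.
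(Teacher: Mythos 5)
Your argument is correct and is exactly the strategy the paper prescribes for all its extension results in the paragraph preceding this proposition: identify the target $\AutC^0(\RFn)$ of the c--classifying map $\varphi_\HL$ as a subgroup of the target $\AutC(\RFn)$ of the w--classifying map $\varphi_\HL^w$, use the compatibility $\varphi_\HL^w\circ\iota=\varphi_\HL$, and chase the resulting commutative square. The paper itself delegates the details to the cited reference, but your write-up fills in precisely the intended diagram chase, so there is nothing to add.
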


\begin{prop}\label{prop:FextendsD}
  Both $\F$ and $\AR$ extend $\Delta$.
\end{prop}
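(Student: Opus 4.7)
The plan is to exploit the classification results proved earlier together with the elementary observation that, on classical diagrams, virtual linking numbers reduce to classical linking numbers. Since Theorem \ref{th:LocMovesOrder} and Proposition \ref{prop:ARCC=>F} already establish that both $\F \stackrel{w}{\Rightarrow} \Delta$ and $\AR \stackrel{w}{\Rightarrow} \F \stackrel{w}{\Rightarrow} \Delta$, Definition \ref{def:Extension} applies and it suffices in each case to prove injectivity of the induced map $\iota_*$. The approach will be to factor $\iota_*$ through the classifying invariants and deduce injectivity from that of the classical classifying map.

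For the $\F$ case, I would consider the diagram
\[
\xymatrix{
\SLn^\Delta \ar[r]^-{\iota_*} \ar[d]_-{(\lk_{ij})_{i<j}}^-\cong & \wSLn^\F \ar[d]^-{(\vlk_{pq})_{p\neq q}}_-\cong \\
\Z^{n(n-1)/2} \ar[r] & \Z^{n(n-1)}
}
\]
where the bottom horizontal map is the diagonal embedding sending $(a_{ij})_{i<j}$ to the tuple $(b_{pq})_{p\neq q}$ with $b_{pq}=b_{qp}=a_{\min(p,q)\max(p,q)}$. The commutativity of this square is immediate from the folklore lemma recalled before Proposition \ref{prop:whitney'strique}, namely that $\vlk_{ij}(D)=\vlk_{ji}(D)=\lk_{ij}(D)$ whenever $D$ is classical. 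Both vertical arrows being isomorphisms (by Proposition \ref{prop:whitney'strique} and Proposition \ref{prop:ClassificationF}), and the bottom map being clearly injective, it follows that $\iota_*$ is injective.

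For the $\AR$ case, the same strategy applies with the analogous commutative square
\[
\xymatrix{
\SLn^\Delta \ar[r]^-{\iota_*} \ar[d]_-{(\lk_{ij})_{i<j}}^-\cong & \wSLn^\AR \ar[d]^-{(\vlk_{ij}+\vlk_{ji})_{i<j}}_-\cong \\
\Z^{n(n-1)/2} \ar[r]^-{\times 2} & \Z^{n(n-1)/2}
}
\]
the commutativity now using that $\vlk_{ij}(D)+\vlk_{ji}(D)=2\lk_{ij}(D)$ on classical diagrams. Multiplication by $2$ is an injection $\Z^{n(n-1)/2}\hookrightarrow\Z^{n(n-1)/2}$, so again $\iota_*$ is injective.

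There is no genuine obstacle here: the substantive content has already been absorbed into the classification statements (Propositions \ref{prop:ClassificationF} and \ref{prop:ClassificationAR}, together with the $\Delta$--classification of Proposition \ref{prop:whitney'strique}); what remains is purely a diagram chase. The only point to double-check is that the comparison between classical and virtual linking numbers is compatible with the quotients, which is automatic since all of $\lk_{ij}$, $\vlk_{ij}$ and $\vlk_{ij}+\vlk_{ji}$ are invariants of the respective equivalence relations.
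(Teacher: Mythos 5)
Your proof is correct and follows essentially the same route as the paper: both arguments identify $\Z^{\frac{n(n-1)}{2}}$ with the diagonal subset of $\Z^{n(n-1)}$ (for $\F$) and with $2\Z^{\frac{n(n-1)}{2}}$ (for $\AR$), and deduce injectivity of $\iota_*$ from the commutativity of the square relating the c--classifying and w--classifying isomorphisms. The only difference is presentational: you make the commutative diagrams explicit where the paper describes the identifications in words.
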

This proposition, as well as Proposition \ref{prop:VCC->CC}, illustrates how a given classical local move may be extended in several different ways.
\begin{proof}
  To prove that $\F$ extends $\Delta$, we use Propositions \ref{prop:whitney'strique} and \ref{prop:ClassificationF}, and identify $\Z^{\frac{n(n-1)}{2}}$ with the subset of $\Z^{n(n-1)}$ made of elements such that the $ij$ and $ji$--summands are equal for every $i\neq j\in\{1,\ldots,n\}$. The linking number $\lk_{ij}$ is then seen as simultaneously equal to $\vlk_{ij}$ and $\vlk_{ji}$.

  Similarly, to prove that $\AR$ extends $\Delta$ using Propositions \ref{prop:whitney'strique} and \ref{prop:ClassificationAR}, we identify $\Z^{\frac{n(n-1)}{2}}$ with the subset  $2\Z^{\frac{n(n-1)}{2}}$ of even-valued elements in $\Z^{\frac{n(n-1)}{2}}$. The linking number $\lk_{ij}$ should then rather be interpreted as $\frac{1}{2}\big(\vlk_{ij}+\vlk_{ji}\big)$.
\end{proof}

\begin{prop}\label{prop:VCC->CC}
  Both $\V$ and $\CC$ extend $\CC$.
\end{prop}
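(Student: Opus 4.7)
The plan is to follow the same strategy as for Proposition~\ref{prop:FextendsD}: use the classification results of the previous subsection and check that the c--classifying map for $\SLn^\CC$ factors through the w--classifying map via the inclusion $\iota$.

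The key preliminary observation is that $\SLn^\CC$ is the trivial group. Indeed, starting from any classical string link diagram, a suitable sequence of crossing changes puts it in descending form (where, at every classical crossing, the lower-indexed strand passes over the higher-indexed one, and each self-crossing is resolved analogously), and any descending string link diagram is equivalent to the trivial one by classical Reidemeister moves. Hence the c--classifying map for $\SLn^\CC$ takes values in the trivial group, as recorded by the $0$ in the upper-right corner of Figure~\ref{fig:BigDiag}.

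For the first statement, I would observe that $\wSLn^\V$ is also trivial: from the Gauss diagram point of view, a single application of $\V$ removes or inserts an arrow, so every Gauss diagram $\V$--reduces to the empty one (which represents the trivial welded string link). The map $\iota_*\colon\SLn^\CC \to \wSLn^\V$ is thus a map between trivial groups, and is tautologically injective.

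For the second statement, injectivity of $\iota_*\colon\SLn^\CC \to \wSLn^\CC$ follows immediately from $\SLn^\CC = 0$. This is consistent with Proposition~\ref{prop:ClassificationCC}: for a classical diagram $D$ one has $\vlk_{ij}(D)=\vlk_{ji}(D)=\lk_{ij}(D)$, so $\vlk_{ij}(D)-\vlk_{ji}(D)=0$, and the w--classifying map sends the image of $\iota$ into the trivial subgroup $\{0\}\subset\Z^{n(n-1)/2}$, matching the trivial c--classifying map on $\SLn^\CC$. There is no real obstacle here: both halves of the statement reduce to the elementary fact that classical string links are trivialized by arbitrary crossing changes, together with the classification results already established.
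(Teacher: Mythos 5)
Your proposal is correct and follows essentially the same route as the paper: both arguments rest on the fact that $\SLn^\CC$ is classified by the trivial group, so that $\iota_*$ is automatically injective, together with the compatibility check that $\vlk_{ij}-\vlk_{ji}$ vanishes on classical diagrams. The paper's proof is just a terser version of the same observation, phrased via the identification of $0$ as a subset of $0$ or of $\Z^{\frac{n(n-1)}{2}}$.
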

\begin{proof}
  In both situation, there is no ambiguity on how $0$ is identified as a subset of $0$ or of $\Z^{\frac{n(n-1)}{2}}$.
  In the latter case, the result follows from the fact that $\vlk_{ij}=\vlk_{ji}$ on classical diagrams, so that $\vlk_{ij}-\vlk_{ji}$ vanishes.
\end{proof}

\begin{prop}\label{prop:wsharp_sharp}
  The local move $\wSharp$ extends $\Sharp$.
\end{prop}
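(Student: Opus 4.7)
The plan is to invoke the classification results just established and compare them directly. By Proposition \ref{prop:whitney'strique}, the assignment $\big(\lk_{i*}^\m\big)_{1\leq i\leq n-1}$ c--classifies $\Sharp$, so $\SLn^\Sharp \cong \Z_2^{n-1}$. On the other hand, the proposition immediately preceding shows that $\wSLn^{\wSharp}$ is w--classified by $\big(\vlk^\m_{ij}+\vlk^\m_{ji}\big)_{1\leq i<j\leq n}\oplus\big(\vlk^\m_{i*}\big)_{1\leq i\leq n-1}$, taking values in $\Z_2^{\frac{n(n-1)}{2}}\oplus \Z_2^{n-1}$.

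First I would compute the restriction of the welded classifying map to classical diagrams via $\iota$. On any $D\in\SLDn$, one has $\vlk_{ij}(D)=\vlk_{ji}(D)=\lk_{ij}(D)$, so $\vlk^\m_{ij}(D)+\vlk^\m_{ji}(D)\equiv 0\pmod 2$ for every $i<j$, while $\vlk^\m_{i*}(D) = \lk^\m_{i*}(D)$ for every $i$. Therefore the composition
\[
\SLDn \stackrel{\iota}{\hookrightarrow} \vSLDn \longrightarrow \wSLn^{\wSharp} \stackrel{\cong}{\longrightarrow} \Z_2^{\frac{n(n-1)}{2}}\oplus\Z_2^{n-1}
\]
factors through the second summand $\{0\}\oplus\Z_2^{n-1}\cong\Z_2^{n-1}$, and under this identification coincides with the classical classifying map $\big(\lk_{i*}^\m\big)_{1\leq i\leq n-1}$.

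Finally, since $\wSharp \stackrel{w}{\Rightarrow} \Sharp$ by Theorem \ref{th:LocMovesOrder}, the map $\func{\iota_*}{\SLn^\Sharp}{\wSLn^{\wSharp}}$ is well defined, and the above commutative diagram shows that post-composing $\iota_*$ with the welded classifying isomorphism yields the classical classifying isomorphism (up to the inclusion $\Z_2^{n-1}\hookrightarrow \Z_2^{\frac{(n+2)(n-1)}{2}}$). Injectivity of $\iota_*$ is then immediate. There is no real obstacle here once the two classifying maps are in hand: the entire content is the elementary identity $\vlk^\m_{ij}+\vlk^\m_{ji}\equiv 0\pmod 2$ on classical diagrams, which ensures that the ``purely welded'' coordinates of the $\wSharp$ invariant vanish on the image of $\iota$.
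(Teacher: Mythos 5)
Your proof is correct and follows exactly the paper's strategy: the paper's own (one-line) argument identifies $\Z_2^{n-1}$ with the last $n-1$ summands $\big(\vlk^\m_{i*}\big)$ of $\Z_2^{\frac{(n+2)(n-1)}{2}}$, the remaining $\big(\vlk^\m_{ij}+\vlk^\m_{ji}\big)$ coordinates vanishing on classical diagrams because $\vlk_{ij}=\vlk_{ji}$ there. You have simply spelled out the details that the paper leaves implicit in its general recipe preceding these extension propositions.
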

\begin{proof}
To prove the statement, $\Z_2^{n-1}$ should be identified with the $n-1$ last $(\vlk^\m_{i*})$--summands of $\Z_2^{\frac{(n+2)(n-1)}{2}}$, the other being identically equal to 0.  
\end{proof}


\section{Braids and links}
\label{sec:Others}
 
We now investigate how the results given so far for string links can be transported to the more familiar context of braids and (possibly unordered) links. 
In particular, we prove the results stated in the introduction. 
\subsection{Welded links}
\label{sec:Links}

Most of the results stated for classical and welded string links in this paper
extend to the case of welded links. 
In particular, it can be noted that none
of the proofs given in section \ref{sec:Relations} uses the fact that
we are dealing with intervals rather than with circles. 
It follows that all generation and equivalence
results stated there hold the same for classical and welded links. 

\subsubsection{From string links to links}
There is a natural way to associate a  welded link to a welded string
link, incarnated by the closure map $\func{\Cl}{\wSLn}{\wCLn}$ which is defined,
using the Gauss diagram point of view, by identifying
  pairwise the endpoints of each strand while keeping the order on the
  resulting circles. This map restricts to a well defined map
  $\func{\Cl}{\SLn}{\CLn}$.
  Cutting circles into intervals 
produces preimages, showing that these
  maps are surjective. However, this procedure does not
  provide a well defined inverse for the closure map since
  it strongly depends on an arbitrary choice of cutting points; and indeed, except for $\func{\Cl}{\SL_1}{\CL_1}$, the closure maps
  are not injective.
A noteworthy consequence of Corollary \ref{cor:Commutation} is that, up to
$\F$ moves, and consequently up to most local moves considered in this paper, the procedure does provide a well-defined inverse, proving that the quotiented notions of welded string links and  welded links coincide. 

\begin{prop}\label{prop:wSLF=wCLF}
  The closure map induces one-to-one
  correspondences between $\wSLn^\mu$ and $\wCLn^\mu$ for $\mu=\F$, $\CC$, $\AR$ or $\wSharp$.
\end{prop}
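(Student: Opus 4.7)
The plan is to establish the proposition by producing an explicit inverse to the closure map on the respective quotients. The surjectivity of $\Cl$ is automatic: given any (welded) link diagram one may cut each circle open at an arbitrary point (distinct from any arrow endpoint) to obtain a (welded) string link diagram whose closure recovers the original. The entire content of the statement is thus that, on the quotients by $\mu$, this cutting procedure is independent of the choices involved and therefore descends to a well-defined inverse map $\wCLn^\mu \to \wSLn^\mu$.

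The key input is that each of the four moves w-generates $\F$: for $\AR$ and $\CC$ this is Proposition \ref{prop:ARCC=>F}, for $\wSharp$ it is built into Theorem \ref{th:LocMovesOrder}, and for $\F$ it is tautological. Consequently Corollary \ref{cor:Commutation} applies to all four quotients, and any two arrow endpoints consecutive on a strand may be commuted freely up to $\mu$-equivalence and welded Reidemeister moves. I would phrase this as saying that, for representatives in $\wSLn^\mu$, only the strand membership (not the position along the strand) of each arrow endpoint matters.

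Given this, I would verify independence of the cut-point choice as follows. Fix a welded link $L$ represented by a Gauss diagram $G$ on $n$ oriented circles, and consider two systems of cutting points that differ only on the $i^\textrm{th}$ circle, at points $c_1$ and $c_2$. The resulting string-link Gauss diagrams $G_1$ and $G_2$ differ exactly in that the arrow endpoints lying on the arc of the circle from $c_1$ to $c_2$ appear at the bottom of the $i^\textrm{th}$ strand in one case and at the top in the other, while every other arrow endpoint is positioned identically. By Corollary \ref{cor:Commutation} these two configurations are equivalent up to $\mu$-moves and welded Reidemeister moves, so $[G_1] = [G_2]$ in $\wSLn^\mu$. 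Independence of the cut-point choice on each circle then follows by iterating over $i$, and well-definedness with respect to the welded Reidemeister moves and $\mu$-moves on $L$ is immediate since each of these is local and lifts to the cut diagram. This yields the inverse map $\wCLn^\mu \to \wSLn^\mu$ and hence the claimed bijection.

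The only real point requiring care is checking that the three local moves we use, namely $\F$, $\AR$, $\CC$, $\wSharp$, together with the welded Reidemeister moves, genuinely apply to Gauss diagrams on oriented circles exactly as they do on strands; this is however a direct translation, as the diagrammatic descriptions of these moves never invoke the strand endpoints. The main conceptual obstacle is rather to formulate clearly why a cut-point change amounts to sliding arrow endpoints from one end of the strand to the other, but once Corollary \ref{cor:Commutation} is in hand this reduces to a purely combinatorial observation.
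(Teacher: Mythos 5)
Your proposal is correct and follows essentially the same route as the paper: surjectivity via cutting, and well-definedness of the inverse by showing that moving a cutting point past an arrow endpoint corresponds to sliding that endpoint along the strand, which Corollary \ref{cor:Commutation} permits since each of the four moves w--generates $\F$. The paper's own proof is a more condensed version of exactly this argument.
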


\begin{proof}
  It is sufficient to prove that, up to welded Reidemeister and the considered local moves, the opening procedure described above does not depend on the chosen cutting points. The resulting map $\func{\Op}{\wCLn^\mu}{\wSLn^\mu}$ would then clearly satisfy $\Op\circ\Cl=\Id_{\wSLn^\mu}$ and $\Cl\circ\Op=\Id_{\wCLn^\mu}$. In order to prove such an independence, it is sufficient, on the  link side seen from the Gauss diagram point of view, to show that a cutting point can cross an arrow endpoint. On the string link side, it corresponds to moving the endpoint from one extremity of the strand to the other. Because of  Corollary \ref{cor:Commutation}, this can be done freely using $\F$ or any other local move which w--generates $\F$.
\end{proof}

Combined with the extension results given in the previous section, Proposition \ref{prop:wSLF=wCLF} induces similar statements for classical objects.

\begin{prop}\label{prop:SLF=CLF}
  The closure map induces one-to-one
  correspondences between $\SLn^\mu$ and $\CLn^\mu$ for $\mu=\Delta$ or $\Sharp$. 
\end{prop}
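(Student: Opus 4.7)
The plan is to deduce this classical statement from the welded analogue (Proposition \ref{prop:wSLF=wCLF}) combined with the extension results (Propositions \ref{prop:FextendsD} and \ref{prop:wsharp_sharp}). The key observation is that, although the opening procedure discussed in the proof of Proposition \ref{prop:wSLF=wCLF} essentially relies on the freedom to slide arrow endpoints along a strand using Corollary \ref{cor:Commutation}, which is not available in a purely classical setting, the welded-to-classical extension bridge lets us transport the statement back to the classical realm.

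\textbf{Surjectivity.} This part is immediate: any classical link diagram can be cut open at a generic point on each of its components to produce a classical string link diagram whose closure is the original diagram. Hence $\func{\Cl}{\SLn^\mu}{\CLn^\mu}$ is surjective for $\mu=\Delta$ or $\Sharp$.

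\textbf{Injectivity.} Consider the commutative square
\[
\xymatrix{
\SLn^\mu \ar[r]^{\iota_*} \ar[d]_{\Cl_*} & \wSLn^{\mu^w} \ar[d]^{\Cl_*^w} \\
\CLn^\mu \ar[r]^{\iota_*} & \wCLn^{\mu^w}
}
\]
where $(\mu,\mu^w)=(\Delta,\F)$ or $(\Sharp,\wSharp)$. Commutativity holds at the diagram level and descends to the quotients because, by Theorem \ref{th:LocMovesOrder}, $\mu^w$ w--generates $\mu$. Suppose that $\Cl_*(L_1)=\Cl_*(L_2)$ for two classical string links $L_1,L_2\in\SLn^\mu$. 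Applying $\iota_*$ on the link side and using commutativity yields $\Cl_*^w(\iota_*(L_1))=\Cl_*^w(\iota_*(L_2))$ in $\wCLn^{\mu^w}$. By Proposition \ref{prop:wSLF=wCLF}, the right-hand vertical arrow $\Cl_*^w$ is a bijection; hence $\iota_*(L_1)=\iota_*(L_2)$ in $\wSLn^{\mu^w}$. Finally, by Proposition \ref{prop:FextendsD} (when $\mu=\Delta$) or Proposition \ref{prop:wsharp_sharp} (when $\mu=\Sharp$), the upper horizontal arrow $\iota_*$ is injective, so $L_1=L_2$ in $\SLn^\mu$.

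No step appears to be a serious obstacle, since all the substantial work has been done upstream: the Gauss--diagram commutation phenomenon underlying the welded closure bijection is encapsulated in Proposition \ref{prop:wSLF=wCLF}, while the nontriviality of the vertical arrows and the injectivity of $\iota_*$ at the string-link level are guaranteed by the extension results of the previous section. The only mild point to double-check is commutativity of the square, which reduces to the elementary fact that the closure operation commutes with local moves applied away from the cutting points.
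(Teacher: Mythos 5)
Your proof is correct and follows essentially the same route as the paper: the paper also deduces injectivity by passing to $\wCLn^{\mu^w}$, invoking the bijectivity of the welded closure map from Proposition \ref{prop:wSLF=wCLF}, and then descending back to the classical quotient via the extension results (Proposition \ref{prop:FextendsD}, resp.\ \ref{prop:wsharp_sharp}, together with Lemma \ref{lem:Extension}). Your commutative-square formulation is just a more explicit packaging of the same argument.
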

\begin{proof}
  Let $D_1$ and $D_2$ be two classical string link diagrams which have, through the closure map, the same image in $\CLn^\Delta$, 
  and hence in $\wCLn^\F$ since $\F \stackrel{w}{\Rightarrow} \Delta$. According to Proposition \ref{prop:wSLF=wCLF}, they are connected by a sequence of welded Reidemeister and $\F$ moves. But, by Proposition \ref{prop:FextendsD} and Lemma \ref{lem:Extension}, they are hence connected by classical Reidemeister and $\Delta$ moves and thus represent the same element of $\SLn^\Delta$.

The statement for $\Sharp$ is proved similarly.
\end{proof}

This statement can be independently proved using the fact that linking numbers simultaneously
 classify  links \cite[Thm 1.1]{MN} and string links \cite[Thm 4.6]{jbjktr} up to $\Delta$.
\medskip 

It follows from Propositions \ref{prop:wSLF=wCLF} and \ref{prop:SLF=CLF} that most classification and extension results given in Section \ref{sec:Classification} hold the same for classical and welded  links, as stated in Theorems \ref{th:ColoredLinksClassification} 
and \ref{th:ColoredLinksExtension}.

\subsubsection{Unordered links}
There is an obvious action of the symmetric group $\SS_n$ on classical and welded  links which simply permutes the order on the components. Classical and welded unordered links are the natural quotients under this action. For each classification given in Theorem \ref{th:ColoredLinksClassification}, 
an action of $\SS_n$ can be defined on the target space so that it results in a classification for the considered local move for unordered (welded) links. For instance, the target space of the classification of welded  links up to $\F$ is $\Z^{n(n-1)}\cong\Z^{\left\{\bigstrut(i,j)\ |\ 1\leq i\neq j\leq n\right\}}$. For every $\sigma\in\SS_n$ and $(a_{ij})_{1\leq i\neq j\leq n}\in\Z^{n(n-1)}$, we can set $\sigma.(a_{ij})_{1\leq i\neq j\leq n}:=(a_{\sigma(i)\sigma(j)})_{1\leq i\neq j\leq n}$; unordered welded links up to $\F$ are then classified by the symmetrized virtual linking numbers $\func{\vlk_{ij}}{\vSLDn}{\fract{\Z^{n(n-1)}}/{\SS_n}}$.
More generally, we have the following. 
\begin{prop}\label{prop:LinksClassification}
  \begin{itemize}
  \item[]
  \item Unordered links up to $\Delta$ are classified by the symmetrized linking numbers.
  \item Unordered links up to $\Sharp$ are classified by the symmetrized $\lk_{i*}^\m$'s.
  \item Unordered welded links up to $\F$ are classified by the symmetrized virtual linking numbers.
  \item Unordered welded links up to $\AR$ are classified by the symmetrized $(\vlk_{ij}+\vlk_{ji})$'s.
  \item Unordered welded links up to $\CC$ are classified by the symmetrized $(\vlk_{ij}-\vlk_{ji})$'s.
  \item Unordered welded links up to $\wSharp$ are classified by the symmetrized $\vlk_{i*}^\m$'s and $(\vlk_{ij}^\m+\vlk_{ji}^\m)$'s.
  \end{itemize}
\end{prop}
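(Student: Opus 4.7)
The plan is to deduce Proposition \ref{prop:LinksClassification} directly from Theorem \ref{th:ColoredLinksClassification} by noting that the symmetric group action on the source is compatible with the natural index-permutation action on the target of each classifying map.

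First, I would set up the general framework. Fix one of the local moves $\mu$ appearing in the statement, and let $\func{\phi}{\wCLn^\mu}{A_n}$ (or its classical analogue) denote the bijective invariant provided by Theorem \ref{th:ColoredLinksClassification}, where $A_n$ is one of $\Z^{n(n-1)}$, $\Z^{\frac{n(n-1)}{2}}$, $\Z_2^{n-1}$, or $\Z_2^{\frac{(n+2)(n-1)}{2}}$ depending on $\mu$. The symmetric group $\SS_n$ acts on $\wCLn^\mu$ by permuting the enumeration of connected components, and on $A_n$ by permuting indices in the obvious way (for $\mu=\Sharp$ or the $\vlk_{i*}^\m$--summand of $\wSharp$, via $\sigma\cdot(a_i)=(a_{\sigma(i)})$; for the others, via $\sigma\cdot(a_{ij})=(a_{\sigma(i)\sigma(j)})$).

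The key point is that $\phi$ is $\SS_n$--equivariant: this is immediate from the definition of the virtual linking numbers, since permuting the labeling of the components of a diagram permutes accordingly the set of arrows from the $i^{\textrm{th}}$ to the $j^{\textrm{th}}$ strands, and all subsequent sums, differences and modulo $2$ reductions commute with reindexing. Consequently, $\phi$ descends to a well-defined map $\func{\overline{\phi}}{\fract\wCLn^\mu/\SS_n}{\fract A_n/\SS_n}$ on the quotients. Since $\phi$ is a bijection, $\overline{\phi}$ is automatically a bijection as well: surjectivity is inherited, and injectivity follows because two ordered representatives $L,L'$ of unordered links with $\overline{\phi}([L])=\overline{\phi}([L'])$ satisfy $\phi(L)=\sigma\cdot\phi(L')=\phi(\sigma\cdot L')$ for some $\sigma\in\SS_n$, hence $L=\sigma\cdot L'$ in $\wCLn^\mu$, and thus $[L]=[L']$ as unordered links.

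Applying this argument to each of the six moves listed in the statement ---using the welded classifications from Theorem \ref{th:ColoredLinksClassification} for $\F$, $\AR$, $\CC$, $\wSharp$, and the classical classifications from Proposition \ref{prop:whitney'strique} (transported from string links to links via Proposition \ref{prop:SLF=CLF}) for $\Delta$ and $\Sharp$--- yields the six stated classifications of unordered (welded) links. No step appears substantially delicate: the only minor care is in stating precisely how $\SS_n$ acts on each target (single vs. double indices, and the fact that on $\Z_2^{\frac{(n+2)(n-1)}{2}}$ the two blocks of coordinates are permuted separately), and observing that the tacit identification of $\Z_2^{n-1}$ with the subgroup of $\SS_n$--invariant type vectors ---used in Theorem \ref{th:ColoredLinksClassification} to drop the last coordinate--- must be undone before quotienting, since the natural $\SS_n$--action permutes all $n$ coordinates rather than only the first $n-1$.
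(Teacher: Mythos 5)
Your proof is correct and follows essentially the same route as the paper, which gives no separate formal proof but derives the proposition from the discussion immediately preceding it: define the index-permutation action of $\SS_n$ on each target space, observe that the classifying invariants of Theorem \ref{th:ColoredLinksClassification} are $\SS_n$--equivariant, and pass to the quotients. Your additional remark that the identification of the $(\lk_{i*}^\m)$--target with $\Z_2^{n-1}$ must be undone before quotienting (since $\SS_n$ naturally permutes all $n$ coordinates, which live in the sum-zero hyperplane of $\Z_2^{n}$) is a point the paper leaves implicit.
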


The target spaces of the above classifications are, in general, not particularly nicely described. A notable exception is the case of classical links up to $\Sharp$ which reduces to the number of $i$ such that $\lk_{i*}^\m$ is 1; see \cite[Thm 11.6.7]{Kawauchi} and \cite[Thm A.2]{MN}.

These classifications can, in turn, be used to show extension results, that is, we have a strict analogue of Theorem \ref{th:ColoredLinksExtension} 
in the unordered case. 

\subsection{Welded braids}

It is well known that pure braids embed in string links (it was actually noticed by Artin; see for instance Chapter 6, Proposition 1.1
of \cite{MK} for a complete proof)
and, similarly
welded pure braid embed in welded string links (\cite[Rk 3.7]{vaskho}).
However this embedding is not, in general, preserved when we add other local moves.
 For instance, elements of $\wPn^\F$, called \emph{unrestricted pure virtual braids} in
\cite{BBD}, do not embed in $\wSLn^\F$.
Indeed, in \cite[Thm 2.7]{BBD}, $\wPn^\F$ is proved to be non abelian while $\wSLn^\F$ is; it follows that there are adjacent endpoints of arrows which do
  not commute. This difference lies in the fact that, in the proof of Proposition
  \ref{prop:F<=>UC}, we allowed for the introduction of non horizontal arrows 
  and, in particular, self-arrows. 
  Indeed, in the sequence proving Proposition \ref{prop:F<=>UC}, the first and
  third strands may be part of the same component if the two initial arrows
  have endpoints on the same two strands. In this case, the $\RD$ move
  creates two self-arrows. 

In another direction, one may wonder if a given local move is strong enough to make surjective the natural map from (welded) pure braids to (welded) string links.
In the classical case, the following is known. 
\begin{prop}\cite[p.399]{HL}
The natural embedding of $\Pn$ in $\SLn$ induces
an isomorphism between $\Pn^\SC$ and $\SLn^\SC$.
\end{prop}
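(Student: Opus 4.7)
The plan is to prove the two claims that the induced homomorphism $\Pn^\SC \to \SLn^\SC$ is both surjective and injective. For surjectivity, I would show that every string link is link-homotopic to a pure braid. Given a string link $L$, consider the height function on each strand given by the second coordinate; the failure of $L$ to be a braid is measured by the local extrema of this height function along each strand. Since $\SC$ is an unknotting operation on long knots (recall that this is essentially the content of \cite[Lemma 4.4]{vaskho2}, also used in the proof of Corollary \ref{cor:Unknotting}), each strand can be untangled by self-crossing changes and isotoped, one at a time, into a monotone position. The order in which strands are straightened matters only up to isotopy in the complement, which is harmless modulo $\SC$. After processing all $n$ strands, the resulting diagram is monotone, i.e., a pure braid, and is $\SC$-equivalent to $L$.

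For injectivity, my strategy would be to factor the inclusion through the Habegger--Lin invariant. By Proposition \ref{prop:whitney'strique}, the map $\varphi_\HL \colon \SLn^\SC \to \AutC^0(\RFn)$ is an isomorphism. The composition
\[
\Pn^\SC \longrightarrow \SLn^\SC \stackrel{\varphi_\HL}{\longrightarrow} \AutC^0(\RFn)
\]
therefore agrees with the reduced Artin representation of $\Pn$, which is already known to define a morphism into $\AutC^0(\RFn)$. Thus it would suffice to show that this composition is an isomorphism: surjectivity is immediate from the surjectivity result above, and injectivity of the composition is exactly the statement that two pure braids that are link-homotopic as string links are already link-homotopic as pure braids. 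This reduces the original injectivity question to an algebraic statement about the kernel of the reduced Artin representation.

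The hard part is this last algebraic injectivity: establishing that the kernel of $\Pn \to \AutC^0(\RFn)$ is precisely the normal subgroup of $\Pn$ generated by pure braid $\SC$-relations (commutators of conjugates of the standard generator $A_{ij}$, which exchange overstrand with understrand on the same pair of components). A natural approach is to first show that every pure braid admits a "Markov-type" normal form stratified by strand, similar to the combing of $\Pn$ into a semi-direct product $\Pn \cong F_{n-1} \rtimes \Pn[n-1]$, and then to compute modulo $\SC$ in this decomposition. In this decomposition each $F_{n-1}$-factor is generated by conjugates of a single element, and the $\SC$ quotient collapses each such factor to its abelianization $\Z^{n-1}$; one then matches this factor-by-factor description with the known structure of $\AutC^0(\RFn)$ as an iterated semi-direct product of the groups of conjugating automorphisms of $\RF_k$ for $k \le n$. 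Combining this with the surjectivity step gives the desired isomorphism.
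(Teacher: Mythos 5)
First, be aware that the paper does not prove this statement: it is quoted verbatim from Habegger--Lin \cite[p.399]{HL}, and the remark immediately following it stipulates that $\Pn$ ``stands for its embedding in $\SLn$'', so that $\Pn^{\SC}$ is by definition the image of $\Pn$ in $\SLn^{\SC}$. Under that reading, the injectivity on which you spend most of your effort is vacuous (an image includes injectively into its ambient group), and the entire content of the proposition is surjectivity: every string link is $\SC$-equivalent to a pure braid. Your surjectivity argument names the right goal but supports it with the wrong fact. That $\SC$ is an unknotting operation on long knots only controls the self-knotting of an individual strand; the obstruction to putting a strand in monotone position is how it winds around the \emph{other} strands, and a homotopy removing its local extrema will in general push it through its neighbours, which is not an $\SC$ move. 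The standard repair --- and the actual content of the Habegger--Lin argument --- is an induction on strands: a generic homotopy of one strand rel endpoints \emph{inside the complement of the others} realizes only self-crossing changes, and once strands $1,\dots,k-1$ are already monotone their complement fibres over $I$ in punctured disks, so the $k$-th strand can be combed to a monotone arc within its homotopy class. None of this appears in your sketch, and ``the order in which strands are straightened matters only up to isotopy in the complement'' is not a substitute for it.

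Second, even under the alternative reading in which $\Pn^{\SC}$ is an abstractly presented quotient of $\Pn$ and injectivity is a genuine assertion, your sketch of the key algebraic step is incorrect. In the combed decomposition $\Pn\cong F_{n-1}\rtimes\P_{n-1}$, the free factor $F_{n-1}=\langle A_{1n},\dots,A_{n-1,n}\rangle$ is not generated by conjugates of a single element, and the $\SC$-relations do \emph{not} collapse it to its abelianization $\Z^{n-1}$: the relation imposed is that each generator commutes with its own conjugates, so the quotient is by definition the reduced free group $\RF_{n-1}$, which is nilpotent of class $n-1$ but non-abelian as soon as $n\geq 3$ (consistently, $\AutC^0(\RFn)\cong\SLn^{\SC}$ is an iterated semidirect product of the $\RF_k$, $k\leq n-1$, and is non-abelian for $n\geq 3$ because of the triple Milnor invariants). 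Matching abelianized factors against $\AutC^0(\RFn)$ as you propose therefore cannot succeed. Finally, note that invoking Proposition \ref{prop:whitney'strique}, i.e.\ \cite[Thm 1.7]{HL}, to deduce the present statement is of doubtful independence, since in \cite{HL} the surjectivity of braids onto link-homotopy classes of string links is part of the machinery used to establish that classification in the first place.
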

Note here that, since $\SC$ requires the presence of a self-crossing, which a pure braid cannot contain, it should be understood in the above statement that $\Pn$  
stands for its embedding in $\SLn$.  
The same holds for the $\SV$ move in the welded settings, addressed in the next result. 
\begin{prop} \label{prop:braids} The natural embedding of $\wPn$ in $\wSL$ induces:
\begin{itemize}
\item  an isomorphism between $\wPn^\SV$  
and $\wSLn^\SV$;
\item an isomorphism between $\wPn^\CC$ and $\wSLn^\CC$.
\end{itemize}
\end{prop}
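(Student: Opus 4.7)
The plan is to leverage the welded string link classifications of Section \ref{sec:Classification}, together with the crucial observation that welded pure braids carry no self-crossings, since their strands are monotone in the second coordinate. In particular, the $\SV$ move has no crossing to act on within $\wPn$, so that $\wPn^\SV = \wPn$ and the map to be studied for the first item is simply the inclusion-induced $\iota_* : \wPn \to \wSLn^\SV$.

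For the $\SV$ isomorphism, surjectivity is exactly the content of point (\ref{item:2}) of the Introduction and is established in \cite{vaskho2}: every welded string link is $\SV$-equivalent to a welded pure braid. For injectivity, Proposition \ref{prop:whitney'strique} identifies $\wSLn^\SV$ with $\AutC(\RFn)$ via $\varphi_\HL^w$, so it suffices to show that the restriction of $\varphi_\HL^w$ to $\wPn$ is injective. This restriction coincides with the welded Artin representation of $\wPn$ on $\RFn$, whose faithfulness is a known external input.

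For the $\CC$ case, the strategy is based on Proposition \ref{prop:ClassificationCC}, which identifies $\wSLn^\CC$ with $\Z^{\frac{n(n-1)}{2}}$ via $(\vlk_{ij} - \vlk_{ji})_{1\leq i<j \leq n}$. Surjectivity of $\iota_* : \wPn^\CC \to \wSLn^\CC$ is obtained by stacking standard welded pure braid generators corresponding to a single classical crossing of strand $i$ over strand $j$ with $i<j$, each such generator hitting exactly one coordinate of the classifying invariant. For injectivity, I would use that $\CC$ w--generates $\F$ by Proposition \ref{prop:ARCC=>F}, so that Corollary \ref{cor:Commutation} applies within the $\CC$ quotient of $\wPn$, making it abelian; combined with the $\CC$ relation that identifies an arrow with its reverse of opposite sign, a pure braid Gauss diagram then reduces to a normal form parametrized by $\Z^{\frac{n(n-1)}{2}}$, matching the $\wSLn^\CC$ classification.

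The main obstacle is the injectivity in the $\SV$ case, which rests on the faithfulness of the welded Artin representation; this is not proved in the present paper and must be invoked from the literature. The $\CC$ case is more self-contained, though some care is required to ensure that the commutation and reorganization steps can be realized by sequences of moves whose intermediate diagrams remain pure braids (or, failing that, that the classification of $\wSLn^\CC$ via a welded-pure-braid-detectable invariant suffices to settle injectivity directly).
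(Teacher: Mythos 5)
Your proposal diverges from the paper's proof and contains a genuine error in the first item. For the $\SV$ statement, recall that (as the paper notes just before the proposition) a pure braid diagram has no self-crossings, so $\wPn^\SV$ must be read as the image of $\wPn$ in $\wSLn$ modulo $\SV$ performed there; it is \emph{not} equal to $\wPn$, and the actual content of the statement is surjectivity, which you correctly source. Your injectivity argument is false: the faithful welded Artin representation of $\wPn$ is its action on the free group on $x_1,\ldots,x_n$, whereas $\varphi^w_\HL$ takes values in $\AutC(\RFn)$ with $\RFn$ the \emph{reduced} free group, and the induced map $\wPn\to\AutC(\RFn)$ is not injective. Already for $n=2$: $\wP_2$ is non-abelian (its two standard generators act on the free group of rank $2$ by $x_1\mapsto x_2x_1x_2^{-1}$ and by $x_2\mapsto x_1x_2x_1^{-1}$, and these automorphisms do not commute), while $\AutC(\RF_2)\cong\Z^2$ by the computation in the proof of Corollary \ref{cor:SVF}. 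The paper simply cites \cite[Thm 3.11]{vaskho} for this item.

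For the $\CC$ statement, your injectivity step invokes Corollary \ref{cor:Commutation} ``within the $\CC$ quotient of $\wPn$''; this is precisely the trap the paper flags in the paragraph preceding the proposition: $\wPn^\F$ is non-abelian \cite[Thm 2.7]{BBD} even though $\wSLn^\F$ is abelian, because realizing $\UC$ from $\F$ (Proposition \ref{prop:F<=>UC}) creates non-horizontal arrows and self-arrows, i.e.\ leaves the braid world. So ``$\CC$ w--generates $\F$, hence Corollary \ref{cor:Commutation} makes $\wPn^\CC$ abelian'' is a non sequitur as written. If you want to argue along these lines you must redo the commutations using only $\OC$ and $\CC$ on horizontal arrows (plausible, since $\CC$ turns a head into a tail in place and $\OC$ lets adjacent tails commute, but this needs to be carried out). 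The paper instead identifies $\wPn^\CC$ with the flat welded pure braid group, computed in \cite[Sec 5.2]{BBD} to be $\Z^{\frac{n(n-1)}{2}}$ and detected by the $(\vlk_{ij}-\vlk_{ji})$'s, and then matches this with the classification of $\wSLn^\CC$ in Proposition \ref{prop:ClassificationCC}. Your fallback remark (that the classifying invariant of $\wSLn^\CC$ is already defined on braids) does not close the gap by itself: it gives surjectivity of $\wPn^\CC\to\Z^{\frac{n(n-1)}{2}}$, but injectivity of $\wPn^\CC\to\wSLn^\CC$ still requires knowing that $\wPn^\CC$ is no larger than $\Z^{\frac{n(n-1)}{2}}$, which is exactly the input the paper takes from \cite{BBD}.
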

\begin{proof}
The first statement was proved in \cite[Thm 3.11]{vaskho}. The second follows from the fact that 
the quotient of $\wPn$ by $\CC$ is isomorphic to the flat welded pure braid group, introduced in \cite[Sec 5.2]{BBD},
which is isomorphic to $\Z^{\frac{n(n-1)}{2}}$; moreover a straightforward verification shows that this isomorphism is
realized  by the  $(\vlk_{ij}-\vlk_{ji})$'s. The statement follows then from the classification of $\wSLn^{CC}$ given in Proposition \ref{prop:ClassificationCC}. 
\end{proof}

On the other hand,  it is known that $\wPn^\SC$ does not coincide
  with  $\wSLn^\SC$ (\cite[Lemma 4.8]{vaskho2}). It seems interesting to analyze the case of other local moves, but for this one should consider their oriented versions,
in order to have  the right analogue for welded pure braids.


\bibliography{Fused}{}
\bibliographystyle{plain}

\end{document}